\title{Topological Data Analysis and Cosheaves}
\author{Justin M.~Curry}
\address{Department of Mathematics,
        Duke University,
        Durham, NC USA}
\email{curry@math.duke.edu}
\theoremstyle{definition}
\newtheorem{thm}{Theorem}[section]
\newtheorem{lem}[thm]{Lemma}
\newtheorem{defn}[thm]{Definition}
\newtheorem{prop}[thm]{Proposition}
\newtheorem{ex}[thm]{Example}
\newtheorem{rmk}[thm]{Remark}
\renewcommand{\sfdefault}{iwona}
\DeclareMathAlphabet{\mathbfsf}{\encodingdefault}{\sfdefault}{bx}{n}
\newcommand{\Hom}{\mathrm{Hom}}
\newcommand{\obj}{\mathrm{obj}}
\newcommand{\Vect}{\mathbfsf{Vect}}
\newcommand{\Open}{\mathbfsf{Open}}
\newcommand{\Int}{\mathbfsf{Int}}
\newcommand{\Entr}{\mathbfsf{Entr}}
\newcommand{\cat}{\mathbfsf{C}}
\newcommand{\dat}{\mathbfsf{D}}
\newcommand{\Iat}{\mathbfsf{I}}
\newcommand{\hF}{\widehat{F}}
\newcommand{\hP}{\widehat{P}}
\newcommand{\id}{\mathrm{id}}
\newcommand{\cok}{\mathrm{cok}}
\newcommand{\im}{\mathrm{im}}
\newcommand{\rank}{\mathrm{rank}}
\newcommand{\RR}{\mathbb{R}}
\newcommand{\N}{{\mathbb{N}}}
\newcommand{\R}{{\mathbb{R}}}
\newcommand{\Z}{{\mathbb{Z}}}
\newcommand{\OO}{\mathcal{O}}
\newcommand{\cU}{{\mathcal U}}
\begin{document}

\maketitle

\begin{abstract}
This paper contains an expository account of persistent homology and its usefulness for topological data analysis. An alternative foundation for level set persistence is presented using sheaves and cosheaves.
\end{abstract}

\section{Introduction}
\label{sec:intro}

Topological data analysis (TDA) is a new area of research that uses algebraic topology to extract non-linear features from data sets. TDA has had marked success in identifying novel subtypes of breast cancer~\cite{nicolau2011topology,lum2013extracting}, extracting structure from the space of natural images~\cite{mumford-data-set}, determining coverage in sensor networks~\cite{de2007coverage}, and tackling many other problems in science and engineering.

In this paper we provide an expository introduction to one branch of TDA known as persistent homology, which was first introduced in~\cite{edelsbrunner2000topological}. We motivate homology and functoriality through examples, which we develop theoretically in the simplicial case. Barcodes are introduced as a convenient visual aid for picturing functoriality in persistence, as well as many other situations in mathematics. 

Outlining a foundation for level set persistence, which generalizes and includes sub-level set persistence as a special case, makes up the bulk of the second half of the paper. The simplicial Leray cosheaves are introduced as a first approximation to studying general level set persistence. To provide a canonical definition for level set persistence, a brief treatment of categories, functors and sheaves is presented. Finally, the entrance path category is introduced as an ideal indexing category for level set persistence that works in higher dimensions for definable maps.

\section{An Intuitive Introduction to Persistence}
\label{sec:intuitive-intro}

Traditionally, the scientific method informs data analysis in the following way: one creates a model, one runs an experiment to obtain data, and then one inspects whether or not the observed data fits the expected model. This method works beautifully in certain areas of science, most notably physics, where a great deal of theory has been developed and experiments continue to be conducted.

Today's problems of ``big data,'' where we have collected data without a particular hypothesis to test, shows that the process of discovery exhibited by physics cannot be reliably imitated. For example, in certain fields of cell biology, we can measure many quantities of interest, but inferring the underlying gene regulatory network is extremely challenging~\cite{Boczko2005}. Furthermore, there are many questions that are of interest to engineers and social scientists where deriving a causal model is not the goal, but rather one wants to automatically and rigorously extract features of interest from an already extant data set. In many situations the data in question often takes on interesting shapes that escape the reach of traditional methods~\cite{lum2013extracting}. 

Topological data analysis aims to provide additional tools for analyzing data sets that appear in science and engineering. These tools are not meant to replace existing techniques; rather, they provide an additional and powerful way for capturing intuitive (as well as not-so-intuitive) features in a data set. These methods focus on the ``shape'' of data and can be applied to data sets living in high dimensions.

\begin{figure}[ht]
  \centering
  \includegraphics[width=.5\textwidth]{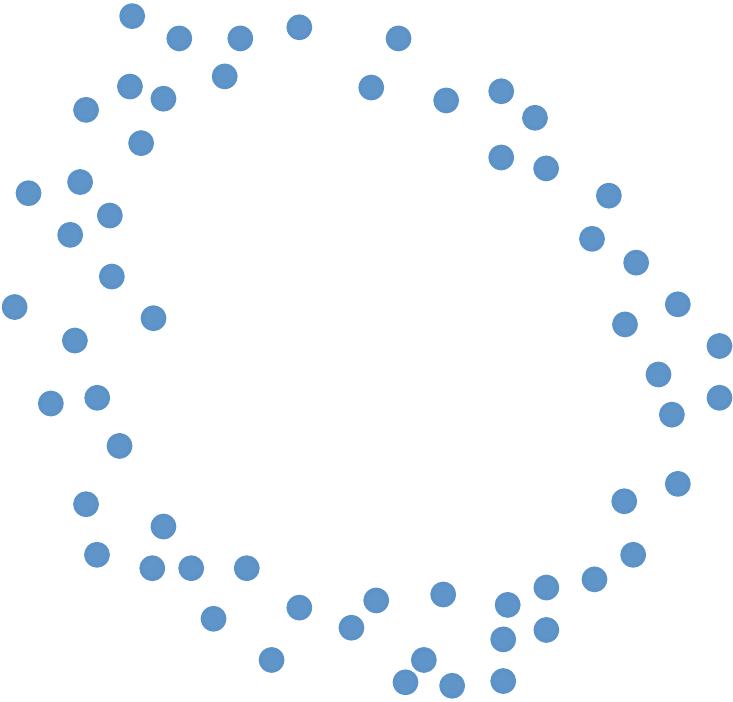}
  \caption{A point cloud $X$.}
  \label{fig:point_cloud_data}
\end{figure}

Consider a finite set of points in $\R^n$, which we call a \textbf{point cloud} for short. For example, our point cloud could be the set depicted in Figure \ref{fig:point_cloud_data}. One sees that the points appear to be sampled from a circle or an ellipse, but this observation is too informal. The first question we take up in this paper is ``How do we make this observation precise?'' If we are going to use descriptors such as ``looks like a circle'' for doing science, then we must use a new language that is precise, quantitative and computable.

\textbf{Homology} provides us with such a language. Homology is a mathematical theory of shape that is applicable to any suitably nice subset of $\R^n$ (as well as other, more general, types of spaces) that describes qualitative features that are invariant under continuous deformation. Such features include the number of connected pieces that a space $X$ breaks up into. Here we say a subset $X$ of $\R^n$ is connected if there is a continuous path in $X$ connecting any two points in $X$; said differently, in a connected space one is able to deform any one point to any other. Another feature that homology measures about a space $X$ is whether a loop in $X$ can be deformed to a single point in $X$ in such a way that the deformation never leaves $X$. Similar, higher-dimensional, features are also detected by homology, e.g.~whether a sphere is deformable to a point can be measured.

Each of the above examples of what homology measures is graded by dimension: points are 0-dimensional, loops are 1-dimensional, spheres are 2-dimensional, and so on. This is because homology is similarly graded by dimension. Homology defines, for each non-negative integer $i$, topological space $X$, and abelian group $G$, a new group
\[
H_i(X;G)
\] 
called the \textbf{$i^{th}$ homology group of $X$}. 

\begin{rmk}\label{rmk:group}
We will assume that $G$ is a field $k$ (such as the reals $\R$ or the field with two elements $\mathbb{F}_2$) so that each homology group is actually a vector space, which we will write as $H_i(X)$. We will continue to use the term ``group'' out of convention, even though ``vector space'' is meant.
\end{rmk}

The elements of the homology group $H_i(X)$ are equivalence classes of certain $i$-dimensional features. For example, two loops that are deformable to each other represent the same element of $H_1(X)$. A full treatment of homology is beyond the scope of this paper, but there are many thorough textbooks on homology, such as~\cite{bredon1993topology,hatcher,spanier}, and a precise version of suitable applicability is developed in Section~\ref{sec:simplicial-complexes}, following~\cite{munkres1984elements}.

Foregoing this more precise treatment of homology, let us describe the homology groups for various subsets of $\R^2$. For example, the homology groups of the subset 
$$C:=\{(x,y)\,|\, x^2+y^2=1\}\subset\R^2,$$ 
which is a more traditional definition of a circle, are
\[
H_0(C)=k \qquad H_1(C)=k \qquad H_i(C)=0 \quad i\geq 2.
\]
If we were to move or stretch the subset $C$, we'd get the same result. If we viewed the circle as lying inside the first two coordinates of the space $\R^{10}$, we'd get the same result. Homology is an intrinsic invariant of a space, with no regard to its embedding in another space.

Let us now view the set of points in Figure \ref{fig:point_cloud_data} using the lens of homology. Foregoing explicit computation, we observe that this picture has the homology groups
 \[
 H_0(X)=k^{60} \qquad H_i(X)=0 \quad i\geq 1,
 \]
which corresponds to the 60 points in the data set and the lack of circles or other homological features. At this point, homology does not confirm our intuition that the data looks like a circle. To remedy this, let us fatten each point in $X$ by including the points that are within distance $r$ of some point of $X$. If we denote the closed ball by $B(x,r)=\{y\in\R^n\,|\,||x-y||\leq r\}$, then our fattened space will be denoted $X_r:=\cup_{x_i\in X} B(x_i,r)$. In Figure~\ref{fig:point_cloud_3sizes} we have depicted these fattened spaces for three different radii.
\begin{figure}[ht]
  \centering
  \includegraphics[width=\textwidth]{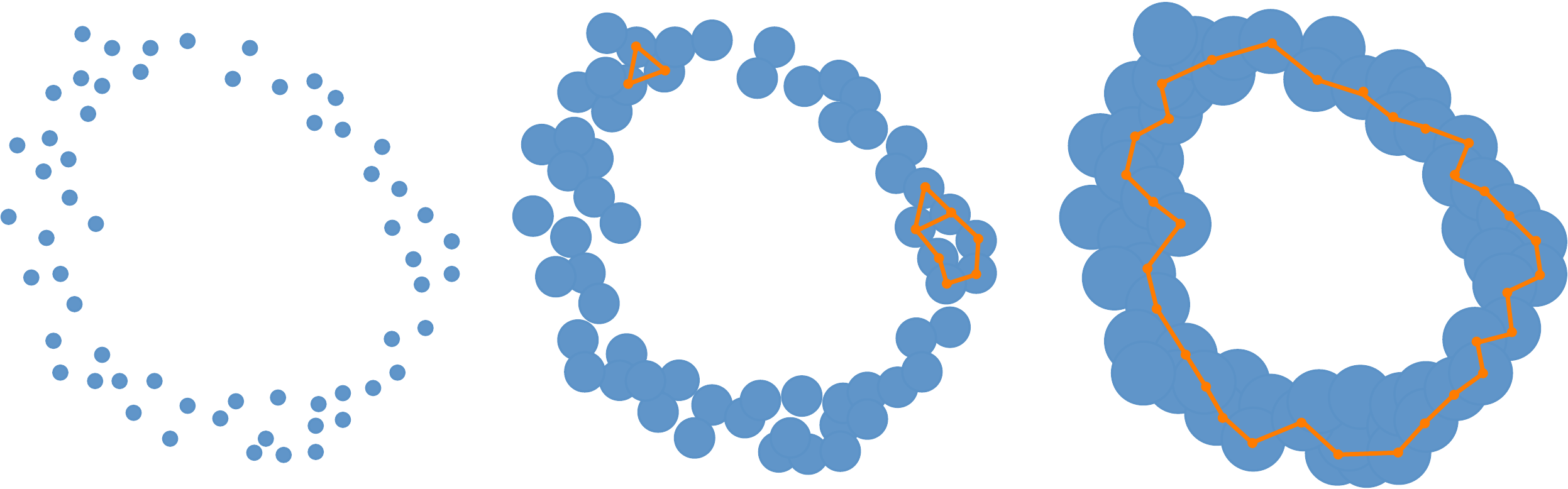}
  \caption{An ``augmented'' point cloud $X_r=\cup B(x_i,r)$ for three different radii $r_0<r_1<r_2$.}
  \label{fig:point_cloud_3sizes}
\end{figure}

The first radius $r_0$ is chosen so that $X_{r_0}$ has the same homology as $X$. The radii $r_1$ and $r_2$ are chosen so that the spaces $X_{r_1}$ and $X_{r_2}$ have homology groups different from $X_{r_0}$. For $X_{r_1}$ we have, again without calculation,
\[
H_0(X_{r_1})=k^{11} \qquad H_1(X_{r_1})=k^3 \qquad H_i(X_{r_1})=0 \quad i\geq 2,
\]
which corresponds to the eleven connected pieces (it may be hard to resolve in Figure~\ref{fig:point_cloud_3sizes} whether certain balls are touching or not and this will affect the value of $H_0(X_{r_1})$), the three small holes we have outlined with edges in a graph, and no higher features such as caves. Finally, when one considers a large enough radius $r_2$, we get a homology computation of
\[
H_0(X_{r_2})=k \qquad H_1(X_{r_2})=k \qquad H_i(X_{r_2})=0 \quad i\geq 2,
\]
which is exactly the answer we provided for the circle $C$. We have captured the apparent circle in Figure~\ref{fig:point_cloud_data} by using homology and this fattening procedure. This is the premier example of persistent homology's effectiveness for capturing shape in a point cloud.

\begin{rmk}
In fact, this procedure captures even more. One can estimate the radius of the circle in $X_{r_2}$ by determining the first radius $r_3>r_2$ where the homology group $H_1(X_{r_3})=0$. This is surprising because homology is invariant under bending and stretching and the radius of a circle is not. The difference is that we are considering a \emph{family} of homology groups over the half real line $\{r\geq 0\}\subset \R$ and the length (a geometric property) over which the homology group $H_1(X_r)=k$ (an algebraic property) gives us an estimate for perceived radius of the point cloud $X$. For a fascinating application of this idea to fractals and self-similar shapes that appear in physics see~\cite{macpherson-shape-topology}.
\end{rmk}

\section{Simplicial Complexes, Homology and Functoriality}
\label{sec:simplicial-complexes}

Now that the reader has some intuition for homology in low degrees and its practicality for data analysis, we introduce a simpler variant of homology defined for simplicial complexes, which are combinatorial models for topological spaces.

\subsection{Simplicial Complexes}

\begin{defn}[Simplicial Complex]\label{defn:simplicial-cplx}
Given a set $V$, a \textbf{simplicial complex} $K$ is a collection of subsets of $V$, such that if $\tau\in K$, then any subset of $\tau$ is also in $K$. Said differently, a simplicial complex $K$ is a subset of the power set $P(V)$ such that
\[
\mathrm{if}\,\,\tau\in K\,\,\mathrm{and}\, \sigma\subseteq \tau \,\, \mathrm{then}\,\, \sigma\in K.
\]
One calls the elements of $K$ \textbf{simplices}. If the cardinality of $\sigma$ is $n+1$, one says that $\sigma$ is an \textbf{$n$-simplex}.
\end{defn}

\begin{ex}\label{ex:interval}
Suppose $V$ is a set with two elements $x$ and $y$. The maximal simplicial complex on $V$ is the set of all non-empty subsets, i.e.~$K=\{\{x\},\{y\},\{x,y\}\}$. The subsets $\{x\}$ and $\{y\}$ are the $0$-simplices of $K$ and $\{x,y\}$ is the only $1$-simplex. This simplicial complex is usually thought of as an undirected graph with two vertices and one edge.
\end{ex}

Below are some more interesting sources of simplicial complexes, which describe the shapes previously considered using finite, simplicial complexes.

\begin{ex}[\v{C}ech Complex]\label{ex:cech-nerve}
Suppose $X$ is a point cloud. For each radius $r>0$ we can construct the \textbf{\v{C}ech complex} $\check{C}_r(X)$ using the set of points in $X$ for a vertex set. A collection of points $\sigma=\{x_{i_0},\ldots,x_{i_n}\}\subseteq X$ defines an $n$-simplex in $\check{C}_r(X)$ if and only if the intersection of closed balls of radius $r$ is nonempty, i.e.~$\cap_{j=0}^nB(x_{i_j},r)\neq \varnothing$.
\end{ex}

\begin{ex}[Vietoris-Rips Complex]
Suppose again that $X$ is a point cloud. We can build a simplicial complex on $X$ using another construction called the \textbf{Vietoris-Rips complex}, or ``Rips complex'' for short, $V_r(X)$ by declaring a list of vertices $\sigma=\{x_{i_0},\ldots,x_{i_n}\}\subseteq X$ to be a simplex if the maximum distance between any two points in $\sigma$ is at most $2r>0$.
\end{ex}

\begin{rmk}
For purposes of computation, the Rips complex is preferred over the \v{C}ech complex. Determining whether a collection of points defines a simplex in the Rips complex can be done simply by computing pairwise distances between points in $X$. However, determining whether a collection of points defines a simplex in the \v{C}ech complex requires determining whether there is some unknown point in the ambient space $\R^n$ that is at most distance $r$ away from the collection. 

Fortunately, there is a comparison theorem that relates the two constructions for a point cloud in $\R^n$. Although every simplex in the \v{C}ech complex at radius $r$ defines a simplex in the Rips complex at radius $r$, the converse is not true, as the reader can check for three points forming an equilateral triangle in the plane. In~\cite{de2007coverage} the authors prove that every simplex in $V_r(X)$ is a simplex in $\check{C}_{\sqrt{2}r}(X)$. These two observations are expressed by the sequence of inclusions
\[
  V_r(X)\subseteq \check{C}_{\sqrt{2}r}(X)\subseteq V_{\sqrt{2}r}(X).
\]
\end{rmk}

\begin{ex}[Nerve]\label{ex:nerve}
Let $\cU:=\{U_i\}_{i\in I}$ be a collection of subsets of a space $X$. The indexing set $I$ can serve as the vertex set for a simplicial complex called the \textbf{nerve}, which we denote by $N_{\cU}$. A set of indices $\sigma:=\{i_0,\ldots,i_n\}\subseteq I$ defines a simplex if and only if the corresponding intersection of sets $U_{\sigma}:=U_{i_0}\cap\cdots\cap U_{i_n}\neq \varnothing$. One can easily see that any subset $\gamma\subset\sigma$ is also a simplex, so that this rule does indeed define a simplicial complex.
\end{ex}

In the next section we will rigorously define the homology of a simplicial complex, called simplicial homology. This definition uses simplices in a very explicit way, but it should be noted that there are other notions of homology, e.g.~singular homology, that only requires the structure of a topological space, such as a subset of $\R^n$. It was an important question as to whether or not singular homology of the space $X_r=\cup B(x_i,r)$ is the same as simplicial homology of the \v{C}ech complex $\check{C}_r(X)$. The answer is yes, and involves some very technical results that have been developed over the past 100 years: the homotopy invariance of singular homology, the equivalence of singular homology and simplicial homology, and the Nerve Theorem (whence the above construction came), all of which are covered in detail in~\cite{hatcher}.

\subsection{Homology for Simplicial Complexes}

Suppose $K$ is a simplicial complex equipped with a total ordering of the vertex set $V$ so that one can speak meaningfully of comparisons such as $v_{i_0}<v_{i_1}<\cdots$ and so on. We use this order to present any simplex $\sigma$ in $K$ as an ordered list of vertices $\sigma=[v_{i_0},\ldots,v_{i_p}]$.

\begin{defn}
The \textbf{boundary of a simplex} $\sigma$, written $\partial \sigma$, is the following formal linear combination
\[
  \partial\sigma=[v_{i_1},\ldots,v_{i_p}]-[v_{i_0},v_{i_2},\ldots]+\cdots+(-1)^p[v_{i_0},\ldots,v_{i_{p-1}}].
\]
\end{defn}

\begin{ex}\label{ex:interval-bdy}
Suppose $K$ is the simplicial complex described in Example~\ref{ex:interval}. The two 0-simplices have empty boundary, so we stipulate that $\partial [x]=\partial [y]=0$. Choosing the order $x<y$, we denote the unique oriented 1-simplex in $K$ by $a=[x,y]$. One can check that
\[
\partial a=\partial [x,y]=[y]-[x].
\]
\end{ex}

\begin{defn}
Given a simplicial complex $K$, define the \textbf{group of $p$-chains} $C_p(K)$ as the vector space spanned by all simplices in $K$ of cardinality $p+1$. Every basis vector can be referred to by the ordered presentation of its vertices, e.g.~$\sigma=[v_{i_0},\ldots,v_{i_p}]$. The \textbf{boundary operator} $\partial_p:C_{p}(X)\to C_{p-1}(X)$ is the linear map gotten by extending the definition of the boundary of a simplex linearly, i.e.~$\partial_p(\sigma_1+\sigma_2)=\partial\sigma_1+\partial\sigma_2$. 
\end{defn}

The most important property of the boundary operator is that $\partial_{p}\circ\partial_{p+1}=0$ for every integer $p\geq 0$, which the reader can check for themselves or find as Lemma 5.3 of~\cite{munkres1984elements}. This system of identities is often summarized simply as $\partial^2=0$, the upshot of which is that $\im \partial_{p+1}\subseteq \ker \partial_p$. This observation is essential for the definition of homology.

\begin{defn}
The \textbf{$p^{th}$ simplicial homology group of $K$} is defined to be the quotient $k$-vector space
\[
  H_p(K)=\frac{\ker \partial_p}{\im \partial_{p+1}}
\]
Elements of $\ker\partial_p$ are called \textbf{cycles} and elements of $\im\partial_{p+1}$ are called \textbf{boundaries}. Any cycle in $C_p(K)$ that is the boundary of a cycle in $C_{p+1}(K)$ is regarded as zero in $H_p(K)$ and any cycle that is not a boundary specifies a non-zero element of $H_p(K)$.
\end{defn} 

\begin{ex}\label{ex:interval-homology}
We can now compute the homology groups of the simplicial complex described in Example~\ref{ex:interval}, by using the boundary calculation in Example~\ref{ex:interval-bdy} and the above definition. Since there are no $p$-simplices for $p>1$, we have that $C_p(K)=0$ for $p>1$. The vector space $C_1(K)$ is one-dimensional, generated by the simplex $a$. It's boundary is $[y]-[x]$, which is not zero, so $\ker\partial_1=0$ and thus $H_1(K)=0$. Since $C_0(K)$ is two-dimensional, generated by $[x]$ and $[y]$, and the image of $\partial_1$ is one-dimensional, spanned by $[y]-[x]$, we can conclude that $H_0(K)$ is one-dimensional. To summarize
\[
H_p(K)=0 \qquad p>0 \qquad \mathrm{and} \qquad H_0(K)=k.
\] 
This reflects the fact that the simplicial complex $K$ is connected and has no other homological features.
\end{ex}

\begin{rmk}[Cohomology]
Homology has a mirror image called \textbf{cohomology}. In place of the group of $p$-chains $C_p(K)$ one studies the vector space of linear functionals on the $p$-simplices of $K$. We define the \textbf{group of cochains} $C^p(K):=C^{*}_p(K)$ to be the set of linear maps from $C_p(K)$ to the field $k$. Since the map $\partial_{p+1}$ maps $C_{p+1}(K)$ to $C_p(K)$, any functional on $C_p(K)$ becomes a functional on $C_{p+1}(K)$ by applying $\partial_{p+1}$ first. This is the standard construction of the transpose $(\partial_{p+1})^{T}$, which we call the \textbf{coboundary operator} and write as $\delta^{p}$. One can easily check that the condition $\partial_{p}\circ\partial_{p+1}=0$ implies $\delta^{p+1}\circ\delta^p=0$, thus allowing us to define the \textbf{$p^{th}$ cohomology group} as
\[
  H^p(X)=\frac{\ker \delta^p}{\im \delta^{p-1}}.
\]
For technical reasons, cohomology is a better invariant than homology, but when $K$ is a finite simplicial complex the vector spaces $H_p(K)$ and $H^p(K)$ are isomorphic.
\end{rmk}

\subsection{The Necessity of Functoriality}
\label{subsec:functoriality-by-example}

\begin{figure}[ht]
    \centering
    \includegraphics[width=.9\textwidth]{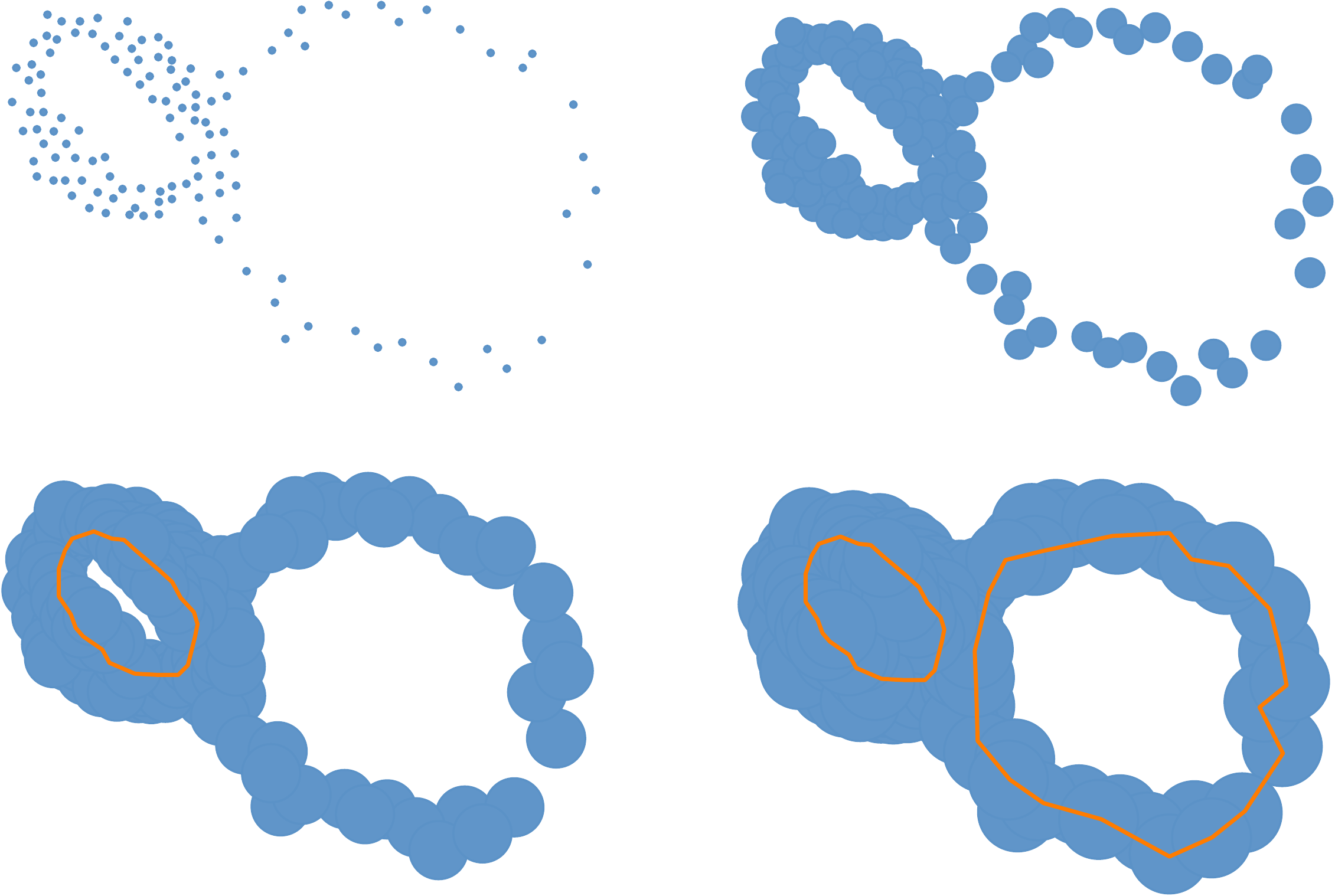}
    \caption{An augmented point cloud $X_{r_i}$ at four different radii $r_0 < r_1 < r_2 < r_3$.}
    \label{fig:pointcloud_functor}
  \end{figure} 

Recall that we are trying to understand the shape of a point cloud $X$ via the homology of the augmented spaces $X_r=\cup B(x_i,r)$. We do this first by computing the homology of the \v{C}ech complex $\check{C}_r(X)$ or, if one is willing to trade accuracy for efficiency, by computing the homology of the Vietoris-Rips complex $V_r(X)$ for varying values of $r>0$. One might try to summarize the homology groups $H_i(X_r)$ for varying $r$ by graphing the dimension of $H_i(X_r)$ as a function of $r$, but this turns out to be misleading; one can mistake a point-cloud with two circles for just one, as Figure~\ref{fig:pointcloud_functor} illustrates. 

In Figure~\ref{fig:pointcloud_functor}, the radius $r$ required to form the big circle on the right is exactly large enough to cause the smaller left circle to disappear. If one wants to discriminate the point clouds presented in Figure~\ref{fig:point_cloud_data} and the upper left hand corner of Figure~\ref{fig:pointcloud_functor}, then one needs more than the dimension of the homology groups for varying radii $r$; instead, one needs to utilize the \textbf{functoriality} of homology. 
\begin{defn}
To say homology is \textbf{functorial} is to say the following: to each continuous map $f:X\to Y$ and integer $i\geq 0$ homology associates a linear map $f_*:H_i(X)\to H_i(Y)$. Intuitively-speaking, this means that a map of spaces defines a map between the corresponding homological features.
\end{defn}

In the bottom row of Figure~\ref{fig:pointcloud_functor} we have a space $X_{r_2}=\cup B(x_i,r_2)$ that includes into $X_{r_3}=\cup B(x_i,r_3)$. This is clear from the definition: if $r_2 < r_3$, then $B(x_i,r_2)\subset B(x_i,r_3)$ and thus there is an inclusion $\iota_{3,2}:X_{r_2}\hookrightarrow X_{r_3}$. A simple calculation reveals that the induced map on first homology is the zero map, i.e.
\[
(\iota_{3,2})_*:H_1(X_{r_2})\to H_1(X_{r_3}) \qquad \mathrm{is} \qquad 0: k\to k.
\]
This calculation captures the observation that the circle on the left is unrelated to the circle on the right. Specifically, the image of the circle in $X_{r_2}$ under the inclusion yields a circle that is the boundary of a disc in $X_{r_3}$ and thus zero in the vector space $H_1(X_{r_3})$.

To contrast this example with what happens in our first example depicted in Figure~\ref{fig:point_cloud_3sizes}, we can observe that once the one large generator for $H_1(X_{r}) $ appears, it is mapped isomorphically onto generators for $H_1(X_{s})$ for $r_{\mathrm{min}}<r<s<r_{\mathrm{max}}$, where $r_{\mathrm{min}}$ refers to the minimum radius required for the ``small'' holes to disappear (as pictured in the middle of Figure~\ref{fig:point_cloud_3sizes}) and $r_{\mathrm{max}}$ corresponds roughly to the radius of the annulus pictured to the right in Figure~\ref{fig:point_cloud_3sizes}.

\subsection{Functoriality for Simplicial Maps}

Although singular homology is functorial for arbitrary continuous maps, a precise version of functoriality for simplicial maps communicates the essential details of how the maps $f_*:H_i(X)\to H_i(Y)$ are defined.

\begin{defn}
Suppose $K$ and $L$ are simplicial complexes. A \textbf{simplicial map} is a map from the vertex set of $K$ to the vertex set of $L$ with the property that if $\sigma$ is a simplex of $K$, then $f(\sigma)$ is a simplex in $L$.
\end{defn}

One of the important properties of a simplicial map is that it takes $p$-simplices of $K$ to $m$-simplices of $L$ as long as $m\leq p$. This implies that there is a map of vector spaces
\[
  C_p(f):C_p(K)\to C_p(L)
\]
where if the image of a $p$-simplex is of dimension less than $p$, then we declare $C_p(f)$ of that simplex to be zero.

If we consider the maps $C_p(f)$ for various $p$ at once, we see that we have a ladder of maps
\[
\xymatrix{ \cdots \ar[r] & C_p(K) \ar[r]^{\partial^K_p} \ar[d]_{C_p(f)} & C_{p-1}(K) \ar[r] \ar[d]^{C_{p-1}(f)} & \cdots \\
\cdots\ar[r] & C_p(L) \ar[r]_{\partial^L_p} & C_{p-1}(L) \ar[r] & \cdots}
\]
with the additional property that
\[
C_{p-1}(f)\circ \partial_p^K = \partial_p^L \circ C_{p}(f) \qquad \forall p\geq 0.
\]
Such a collection of maps is called a \textbf{chain map} and has the property that it induces a well-defined map on homology.

\begin{lem}[Lemma 12.1 of~\cite{munkres1984elements}]
Given a simplicial map $f:K\to L$, the chain map $C_{\bullet}(f):C_{\bullet}(K)\to C_{\bullet}(L)$ induces well-defined maps between homology groups.
\[
f_*:H_i(K)\to H_i(L)
\]
\end{lem}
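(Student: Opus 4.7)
The plan is to establish the lemma in two stages. First I would verify the \emph{chain map condition} $C_{p-1}(f)\circ\partial_p^K=\partial_p^L\circ C_p(f)$, since this was only asserted in passing in the paragraph preceding the statement. Second I would use this condition to descend to homology by the standard argument that a chain map sends cycles to cycles and boundaries to boundaries.

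For the chain map condition, I would fix a $p$-simplex $\sigma=[v_{i_0},\ldots,v_{i_p}]$ of $K$ and split into cases based on how many of the images $f(v_{i_j})$ collide. When all $p+1$ images are distinct, $C_p(f)(\sigma)=[f(v_{i_0}),\ldots,f(v_{i_p})]$ (after re-sorting), and both sides of the identity reduce to the same alternating sum of the $p$-element subsets of this list. When three or more images collide, every term on both sides contains a repeated vertex after applying $f$, so both sides vanish by the definition of $C_p(f)$ on degenerate simplices. The delicate case is when exactly two images coincide, say $f(v_{i_j})=f(v_{i_k})$ with $j<k$; then $C_p(f)(\sigma)=0$, so I need the left-hand side to vanish too. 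Of the $p+1$ summands of $\partial_p^K\sigma$, all but the $j$-th and $k$-th still contain both $v_{i_j}$ and $v_{i_k}$, so they map to $0$ under $C_{p-1}(f)$. The two surviving terms yield the same ordered list in $L$ once reordered, and the combinatorial point is that the signs $(-1)^j$ and $(-1)^k$ together with the transpositions required to re-sort the two lists combine to opposite overall signs, making the two terms cancel.

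For the descent to homology, suppose $c\in\ker\partial_p^K$. Then $\partial_p^L C_p(f)(c)=C_{p-1}(f)\,\partial_p^K(c)=0$, so $C_p(f)$ sends cycles to cycles. If $c$ and $c'$ differ by a boundary $\partial_{p+1}^K d$, then $C_p(f)(c)-C_p(f)(c')=\partial_{p+1}^L\,C_{p+1}(f)(d)$ is again a boundary. Hence the assignment $[c]\mapsto[C_p(f)(c)]$ yields a well-defined linear map $f_*:H_p(K)\to H_p(L)$, and linearity is immediate from the linearity of $C_p(f)$.

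The main obstacle is the sign bookkeeping in the two-collision case above: one has to check that the number of adjacent transpositions needed to present $f(\sigma)$ with its duplicated vertex removed from two different positions always differ in parity by exactly $k-j-1$, so that after combining with the boundary signs $(-1)^j$ and $(-1)^k$ the two nonzero terms cancel. Everything else in the argument is either formal diagram-chasing or a direct inspection of the alternating-sum boundary formula.
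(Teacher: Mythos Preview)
The paper does not actually supply a proof of this lemma; it simply records the statement and attributes it to Munkres. So there is nothing to compare against beyond the citation. Your argument is correct and is essentially the standard proof one finds in Munkres: verify the chain map identity on basis simplices by a case analysis on how much $f$ collapses $\sigma$, then invoke the formal fact that chain maps preserve cycles and boundaries. The only substantive content is the sign cancellation in the single-collision case, which you have identified correctly; the parity count you describe (the two surviving faces differ by a transposition of the repeated vertex past $k-j-1$ intermediate vertices, combining with $(-1)^j$ versus $(-1)^k$ to give opposite signs) is exactly the computation Munkres carries out.
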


\section{Barcodes: Visualizations of Functoriality}
\label{sec:barcodes}

As described at the beginning of Section~\ref{subsec:functoriality-by-example}, one must use the homology groups of the $X_r$ as well as the induced maps on homology $H_i(X_r)\to H_i(X_{r'})$ for $r<r'$ in order to capture homological features that persist over varying radii. This information is collectively called a \textbf{persistence module} and is defined below.

Despite the complexity inherent to persistence modules, there are two methods for visualizing persistence modules that have had success in making TDA easier to understand by non-mathematicians. The first method of visualizing persistence is the \textbf{persistence diagram}, which we describe in Remark~\ref{rmk:pers-diag}. The persistence diagram came first and was developed simultaneously with persistent homology~\cite{cohen2007stability,edelsbrunner2000topological} and is still widely used today~\cite{bendich2014persistent}. The second method of visualization is the \textbf{barcode} and it was developed by Carlsson, Zomorodian, Collins and Guibas~\cite{carlsson2004persistence} after they reformulated the definition of persistent homology provided by Edelsbrunner, Letscher and Zomorodian~\cite{edelsbrunner2000topological}. In this section we describe the barcode construction from a modern perspective using a recent theorem of Crawley-Boevey~\cite{crawley2012decomposition}. We prefer the barcode method only because it is more useful for visualizing results from the (co)sheaf-theoretic perspective developed later in the paper.

\begin{defn}\label{defn:persistence_module}
Let $(\R,\leq)$ denote the reals with its total ordering. A \textbf{persistence module} consists of a collection of vector spaces $\{V_t\}_{t\in\R}$, one for each real number $t$, and a collection of linear maps $\rho_{t,s}:V_s\to V_t$ for every pair of numbers $s\leq t$. Moreover, we require that if one has a triple $r\leq s\leq t$, then $\rho_{t,r}=\rho_{t,s}\circ\rho_{s,r}$. We denote a persistence module by $(V,\rho^V)$, but we may suppress the $V$ in $\rho^V$ or even drop the $\rho^V$ altogether.
\end{defn}

\begin{rmk}
Definition~\ref{defn:persistence_module} works equally well for any totally ordered set, just as it was defined in~\cite{carlsson2004persistence}. Consequently, we will sometimes shift from the reals $(\R,\leq)$ to the integers $(\Z,\leq)$ or to the natural numbers $(\N,\leq)$ and still use the terminology of persistence modules.
\end{rmk}

Observe that one can add two persistence modules to create a third persistence module, i.e. if $(V,\rho^V)$ and $(W,\rho^W)$ are two persistence modules, then one obtains a third persistence module $(U,\rho^U)$ by defining $U_t:=V_t\oplus W_t$ and $\rho_{t,s}^U:=\rho^V_{t,s}\oplus \rho^W_{t,s}$. We denote the sum by $(V\oplus W,\rho^V\oplus\rho^W)$ or more simply by $V\oplus W$.

There is a fundamental structure theorem for persistence modules, due to Crawley-Boevey~\cite{crawley2012decomposition}, that explains how any persistence module can be written as a direct sum of simpler persistence modules. We now describe these simpler persistence modules.

\begin{defn}\label{defn:interval}
An \textbf{interval} in $(\R,\leq)$ is a subset $I\subset \R$ having the property that if $r,t\in I$ and if there is an $s\in \R$ such that $r\leq s\leq t$, then $s\in I$ as well. An \textbf{interval module} $k_I$ assigns to each element $s\in I$ the vector space $k$ and assigns the zero vector space to elements in $\R\setminus I$. All maps $\rho_{t,s}$ are the zero map, unless $s,t\in I$ and $s\leq t$, in which case $\rho_{t,s}$ is the identity map. 
\end{defn}

Since interval modules are completely determined by the interval where they assign non-zero vector spaces, we can draw a \textbf{bar} to represent an interval module. The following structure theorem shows that any persistence module can be represented by a collection of bars, called a \textbf{barcode}.

\begin{thm}[Decomposition for Pointwise-Finite Persistence Modules~\cite{crawley2012decomposition}]\label{thm:crawleyboevey}

If $(V,\rho^V)$ is a persistence module for which every vector space $V_t$ is finite-dimensional, then the module is isomorphic to a direct sum of interval modules, i.e.
\[
V\cong \bigoplus_{I\in D} k_I.
\]
Here $D$ is a multi-set of intervals. A multi-set is a set allowing repetitions, i.e. a set equipped with a function $\mu$ indicating the multiplicity of each given element.

\end{thm}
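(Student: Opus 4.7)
The plan is to establish the theorem in two stages. First, classify the indecomposable pointwise-finite persistence modules by showing each is isomorphic to an interval module $k_I$. Second, show every such module decomposes as a direct sum of indecomposables, then invoke uniqueness from the Krull--Schmidt--Azumaya machinery. For the classification step, suppose $(V,\rho)$ is nonzero and indecomposable. I would examine the double filtration on each stalk $V_t$ given by the images $\rho_{t,s}(V_s)$ for $s \le t$ and by the kernels of $\rho_{u,t}$ for $u \ge t$. Pointwise-finiteness makes this filtration tractable at each point, and any compatible splitting of some $V_t$ would produce a nontrivial idempotent in the endomorphism ring $\mathrm{End}(V)$, contradicting indecomposability. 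Combined with an analysis at the ``birth'' and ``death'' endpoints where $V_t$ first becomes nonzero or finally becomes zero, this forces the support $\{t \in \R : V_t \neq 0\}$ to be an interval $I$, each nonzero $V_t$ to be one-dimensional, and every structure map between nonzero stalks to be the identity after a choice of basis --- precisely the data defining $k_I$.

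For the decomposition stage, note that $\mathrm{End}(k_I) \cong k$, a local ring, so Azumaya's theorem guarantees that any decomposition into interval modules is unique up to reordering and isomorphism. The existence is the real content. Following Crawley-Boevey, one embeds the category of pointwise-finite persistence modules into the functor category $\Fun((\R,\leq),\Vect)$ and verifies that pointwise-finite representations of a totally ordered set are pure-injective. A classical result, due to Fisher and Warfield among others, then asserts that pure-injective modules decompose into indecomposables with local endomorphism rings. Combined with the classification above, this yields the claimed direct sum of interval modules.

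The main obstacle is establishing existence of the decomposition in full generality over the uncountable index $\R$. One cannot simply peel off summands by transfinite recursion: at limit ordinals one loses control of the pointwise-finiteness of the remaining complement, and idempotent splittings chosen at earlier stages need not cohere. Crawley-Boevey's key move is to sidestep any such direct construction and appeal instead to the general pure-injectivity decomposition theorem. A far more elementary shortcut suffices in the principal setting of this paper, where persistence modules arise from a finite simplicial complex filtered by $\R$: such modules are constant outside a finite set of critical values, so the problem reduces to decomposing a finite-dimensional representation of the totally ordered poset $\{1 < 2 < \cdots < n\}$, equivalently of the $A_n$ quiver, and Gabriel's classical theorem then makes the decomposition essentially linear algebra.
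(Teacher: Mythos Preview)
The paper does not supply a proof of this theorem; it is stated with a citation to Crawley-Boevey and used as a black box throughout the remainder of the exposition. There is therefore no proof in the paper to compare your proposal against.

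As a separate comment on the content of your sketch: the attribution of a pure-injectivity argument to Crawley-Boevey is not accurate. His proof is direct and elementary, working with the lattice of \emph{cuts} of the totally ordered index set and constructing, at each point $t$, a basis of $V_t$ compatible with the image and kernel filtrations you describe; he then checks these bases can be chosen coherently across $t$ to produce the interval summands explicitly. No appeal to pure-injectivity or to a general Fisher--Warfield decomposition theorem is made, and it is not clear that pointwise-finite representations of an arbitrary totally ordered set are pure-injective in the relevant sense. Your final paragraph is well taken: for the purposes of this paper, where every persistence module arises from finitely many critical values, the $A_n$ quiver case (Gabriel's theorem, or even direct linear algebra) is entirely sufficient, and the paper's later Remark on the historical gap between the 2004 barcode definition and the 2012 proof implicitly acknowledges this.
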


\begin{rmk}
It should be noted that the definition of a barcode first appears in 2004~\cite{carlsson2004persistence}, but the above theorem, which is used to prove that every persistence module has a presentation as a barcode, was only proved in 2012~\cite{crawley2012decomposition}. The reason is that~\cite{carlsson2004persistence} uses a standard classification theorem for finitely generated modules over a principal ideal domain described in~\cite{zomorodian2005computing}, which only works when the indexing set is $(\Z,\leq)$ rather than $(\R,\leq)$.
\end{rmk}

\begin{rmk}\label{rem:undirected-bars}
When the indexing set is $(\Z,\leq)$ the conclusion of Theorem~\ref{thm:crawleyboevey} does not actually depend on the direction of the arrows in the persistence module. This means that when we considered \textbf{zig-zag modules}, i.e.~vector spaces and maps of the form 
\[
  \cdots V_n \leftarrow V_{n+1} \rightarrow V_n \leftarrow V_{n+2} \cdots
\]
with integer indexing, they will have a decomposition into bars as well.
\end{rmk}

\subsection{Barcodes in Linear Algebra}
For this section, let us assume that all of our persistence modules are indexed by the integers $(\Z,\leq)$. In this setting, Crawley-Boevey's theorem, which is a generalization of much older results in quiver representation theory~\cite{derksen}, summarizes a great deal of elementary linear algebra. For example, it has the fundamental theorem of linear algebra as a consequence~\cite{strang1993fundamental}, i.e. any map of vector spaces $T:V\to W$ has a matrix representation that is diagonal with $0$ and $1$ entries, the number of 1s corresponding to the rank of the matrix, cf.~\cite{artin1991algebra} Chapter 4, Proposition 2.9. Said differently, there are vector space isomorphisms making the following diagram commute:

\[
    \xymatrix{ V \ar[r]^T \ar[d]_{\varphi}^{\cong} & W \ar[d]^{\psi}_{\cong} \\
    \im(T) \oplus \ker(T) \ar[r]_{\id\oplus 0} & \im(T)\oplus \cok(T)}
\]
Here $\im(T)$, $\ker(T)$, and $\cok(T)$ refer to the image, kernel and cokernel of $T$ respectively. Although the image of $T$ is properly a subspace of $W$, the first isomorphism theorem identifies it with $V$ modulo the kernel.

\begin{ex}[Barcodes for Visualizing Rank]
Consider any linear map $T:\R^3\to\R^2$ as a persistence module by extending by zero vector spaces and maps. There are three isomorphism classes of such persistence modules determined by the rank of $T$. The associated barcodes are depicted in Figure~\ref{fig:rank012bars}.
\end{ex}

\begin{figure}
    \centering
    \includegraphics[width=.8\textwidth]{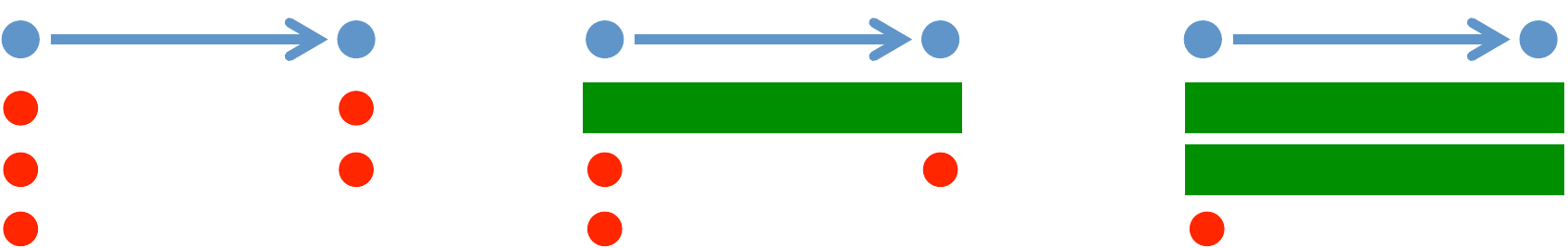}
    \caption{Barcodes associated to $T:\R^3\to\R^2$ for $\rank(T)=0,1,2$}
    \label{fig:rank012bars}
\end{figure}

\begin{ex}[Barcodes for Chain Complexes]\label{ex:chaincomplex_bc}
A chain complex of vector spaces is a special example of a persistence module where $\rho_{i+1}\circ\rho_i=0$. Consequently, every chain complex has a presentation as a barcode . With a moment's reflection on Theorem~\ref{thm:crawleyboevey} one can see that any chain complex can be written as the direct sum of two types of modules: 
the length zero interval modules
\[
S_i: \qquad \cdots \to 0 \to k \to 0 \to \cdots
\]
and the length one interval modules.
\[
P_i: \qquad \cdots \to 0 \to k \to k \to 0 \to \cdots
\]
Figure~\ref{fig:chaincomplex_bcs} gives a visual depiction of such a barcode decomposition. One should note that the process of taking homology of a chain complex corresponds precisely to deleting the green bars and leaving behind the red dots.
\end{ex}

\begin{figure}[ht]
    \centering
    \includegraphics[width=.8\textwidth]{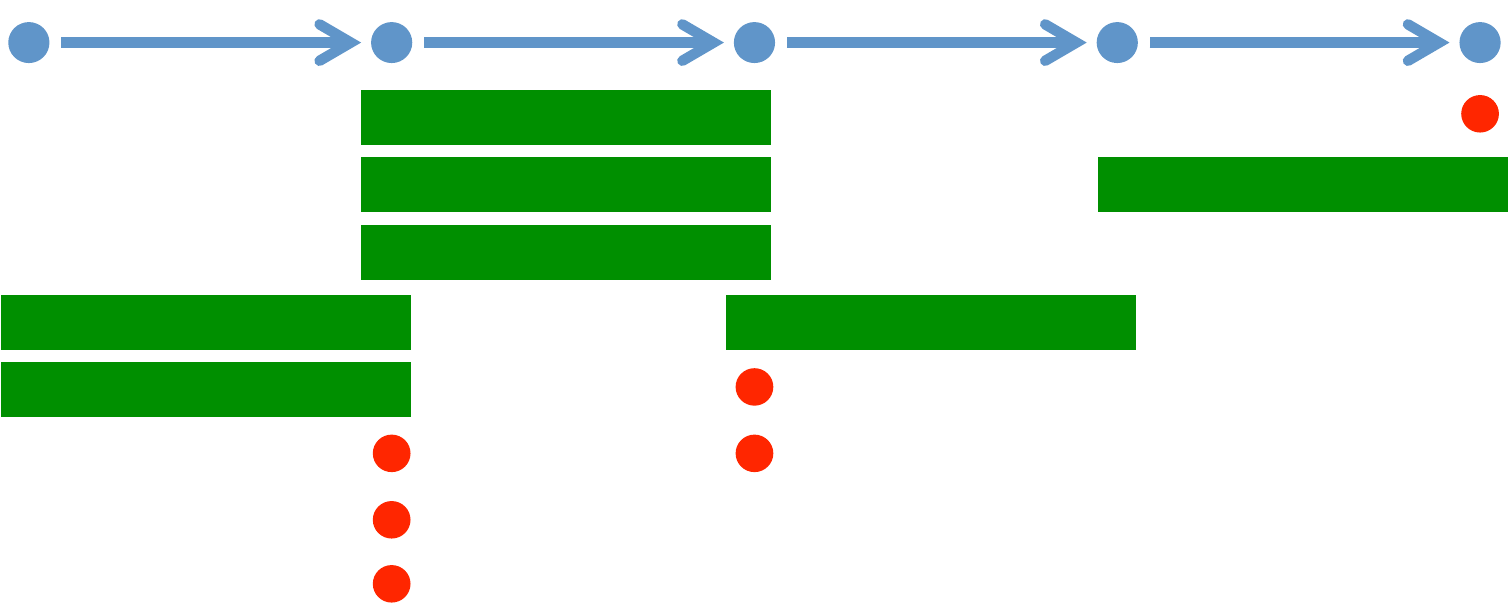}
    \caption{Barcode for a Chain Complex}
    \label{fig:chaincomplex_bcs}
\end{figure}

\begin{rmk}
If the reader is familiar with the notion of chain homotopy, one can observe that the green bars give a visualization of a chain homotopy between chain complexes: the first being the original chain complex and the second being the graded homology, viewed as a chain complex with zero maps between the homology groups. Thus Figure~\ref{fig:chaincomplex_bcs} provides a proof-by-picture of a standard exercise in homological algebra: that the derived category of chain complexes over a field is equivalent to the graded category of vector spaces~\cite{weibel}.
\end{rmk}

\subsection{Barcodes for Persistence}
Now we will return to persistence modules that are indexed by $(\R,\leq)$. As before, let $X$ be a point cloud. One can easily observe that as subsets of $\R^n$ we have the sequence of inclusions
\[
  X_{r_0}\hookrightarrow X_{r_1} \hookrightarrow X_{r_2} \hookrightarrow X_{r_3} \cdots
\]
whenever $r_0\leq r_1\leq r_2 \leq \cdots$ and so on. Taking the $i$th homology of this sequence of spaces and maps provides a persistence module:
\begin{equation*}
  H_i(X_{r_0}) \to H_i(X_{r_1}) \to H_i(X_{r_2}) \to H_i(X_{r_3}) \to \cdots 
\end{equation*}
By applying Theorem~\ref{thm:crawleyboevey}, we can determine the barcode of the point cloud $X$. Long bars (intervals that span a long range of radii) are considered to be robust topological signals in the data set. For Figure \ref{fig:point_cloud_data}, there would be one long bar in the persistence module corresponding to $H_0$, indicating that after a certain radius the space $X_r$ is connected, and another long bar in the module corresponding to $H_1$, indicating the apparent circle in the data set. To summarize, we have the following prototypical pipeline of topological data analysis.

\begin{defn}[Point Cloud Persistence]\label{defn:point-cloud-persistence}
The \textbf{point cloud persistence pipeline} consists of the following ingredients and operations:
\begin{enumerate}
\item Let $X$ denote a point cloud, i.e. the union of a finite set of points $\{x_i\}\subset \R^n$.
\item The union of balls $X_r:=\cup_{x_i\in X} B(x,r)$ and their inclusions (or alternatively the \v{C}ech or Rips complex and the inclusions of simplicial complexes)
defines for each $i\geq 0$ a persistence module:
\begin{equation*}
  H_{i}(X_{r_0}) \to H_{i}(X_{r_1}) \to H_{i}(X_{r_2}) \to H_{i}(X_{r_3}) \to \cdots 
\end{equation*}
\item Applying Theorem \ref{thm:crawleyboevey} provides a multiset of intervals, which is visualized as a barcode or a persistence diagram by the end user. 
\end{enumerate}
\end{defn}

\begin{rmk}[Persistence Diagrams]\label{rmk:pers-diag}
One can represent any interval $I\subset\R$ using its left-hand endpoint, which we call its \textbf{birth} $b(I)$, and its right-hand endpoint, which we call its \textbf{death} $d(I)$. We can then represent this as a point in the plane $\R^2$ via its coordinate pair $(b(I),d(I))$, where clearly $b(I)\leq d(I)$. In this way we can use Theorem~\ref{thm:crawleyboevey} to produce a multi-set of points in the plane from any persistence module. This multi-set of points is the persistence diagram.
\end{rmk}

\subsection{Barcodes from Sub-Level Sets}

The first and second steps of the persistence pipeline offer opportunities for endless modification and application. Instead of considering a point cloud, one can start with a space $X$ equipped with a function $f:X\to\R$ and consider the family of sub-level sets $X_r:=f^{-1}(-\infty,r]$. As long as the function and space are sufficiently nice, we can use Theorem \ref{thm:crawleyboevey} to produce a barcode.

In particular, this view generalizes the previous description in the following simple way. Given a point-cloud $X$ in $\R^n$, consider the function that for each point $p\in \R^n$ returns the minimum Euclidean distance from $p$ to some point in $X$, i.e.
\[
f(p)=\min_{x_i\in X} \{||p-x_i||\}.
\]
Clearly the sequence of augmented point clouds
\[
X_{r_0}\hookrightarrow X_{r_1} \hookrightarrow X_{r_2} \hookrightarrow \cdots
\]
is equal to
\[
f^{-1}(-\infty,r_0] \hookrightarrow f^{-1}(-\infty,r_1] \hookrightarrow f^{-1}(-\infty,r_2] \hookrightarrow \cdots
\]


When the space $X$ has the structure of a manifold and $f:X\to\R$ is differentiable, sub-level set persistence provides a new perspective on Morse theory, which describes precisely how the homology of the sub-level set $X_t$ changes when $t$ passes through a critical value of $f$.

\begin{figure}[ht]
    \centering
    \includegraphics[width=\textwidth]{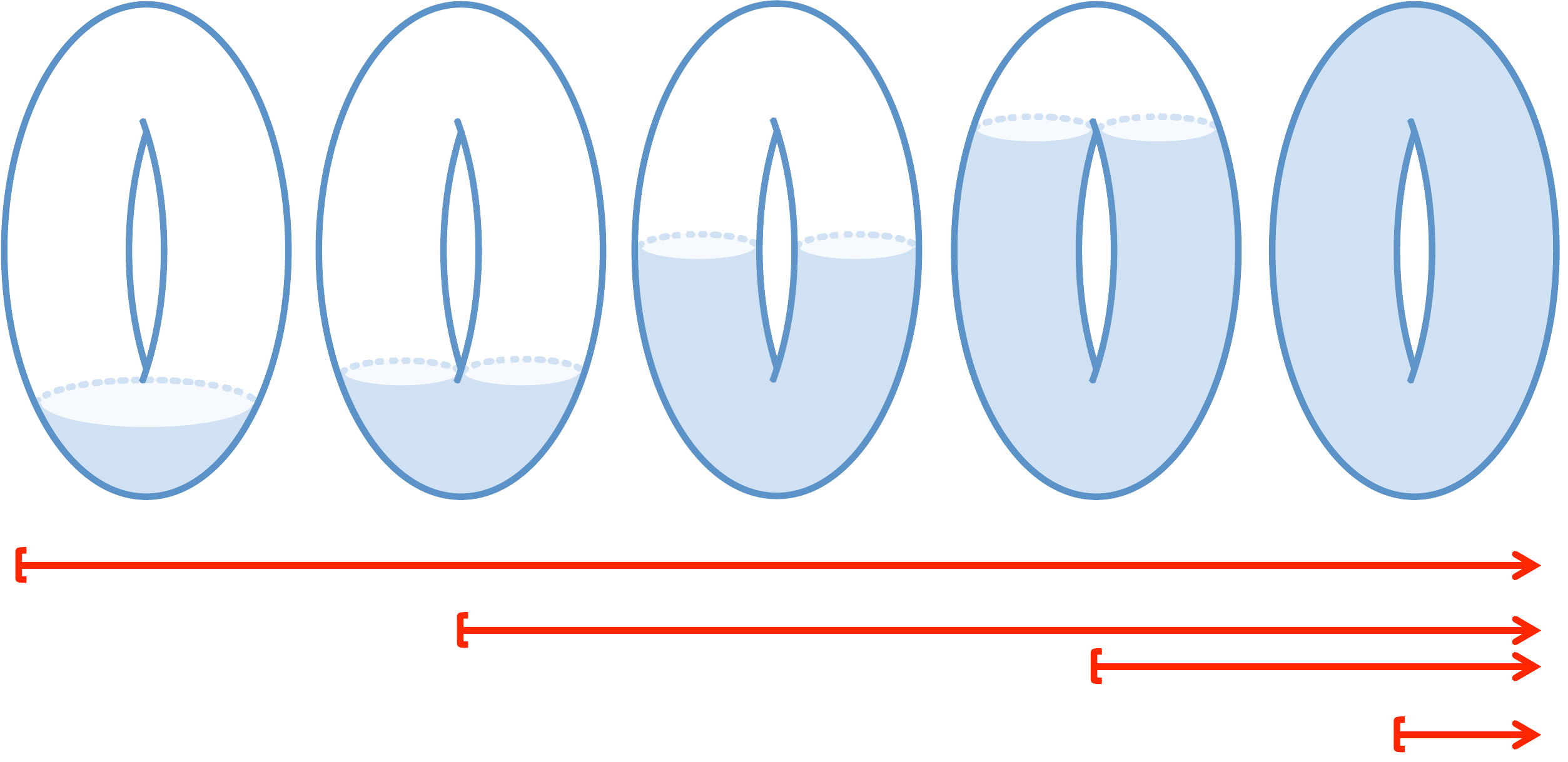}
    \caption{Barcodes for the filtration of the torus described in Example~\ref{ex:bott-torus}.}
    \label{fig:persistence_torus}
\end{figure}

\begin{ex}[Barcodes for Bott's Torus]\label{ex:bott-torus}
Consider the standard height function on the torus $h:X\to\R$, whose sub-level sets are depicted in Figure~\ref{fig:persistence_torus}. This example was first popularized by Raoul Bott~\cite{morse-theory-indomitable}. The function on the torus can be locally described in a neighborhood $U$ as a function $f|_U:\R^2\to\R$. If one calculates the matrix of partial derivatives $[\frac{\partial^2 f}{\partial x_i \partial x_j}]$ at a critical point $p\in U$ (point where $\nabla f(p)=0$), then the number of negative eigenvalues defines the \textbf{index} of the function at $p$. What Morse theory says for this example is that at each critical value the homology of the sub-level set changes by introducing homology in degree equal to the index of the corresponding critical point. The top bar in Figure~\ref{fig:persistence_torus} is the barcode for the $H_0$ persistence module, the middle two bars determine the barcode for the $H_1$ persistence module, and the final bar is the barcode for the $H_2$ persistence module.
\end{ex}

More important to applications is the freedom to choose functions other than distance for describing data.

\begin{figure}[ht]
\begin{center}
\includegraphics[width=.7\textwidth]{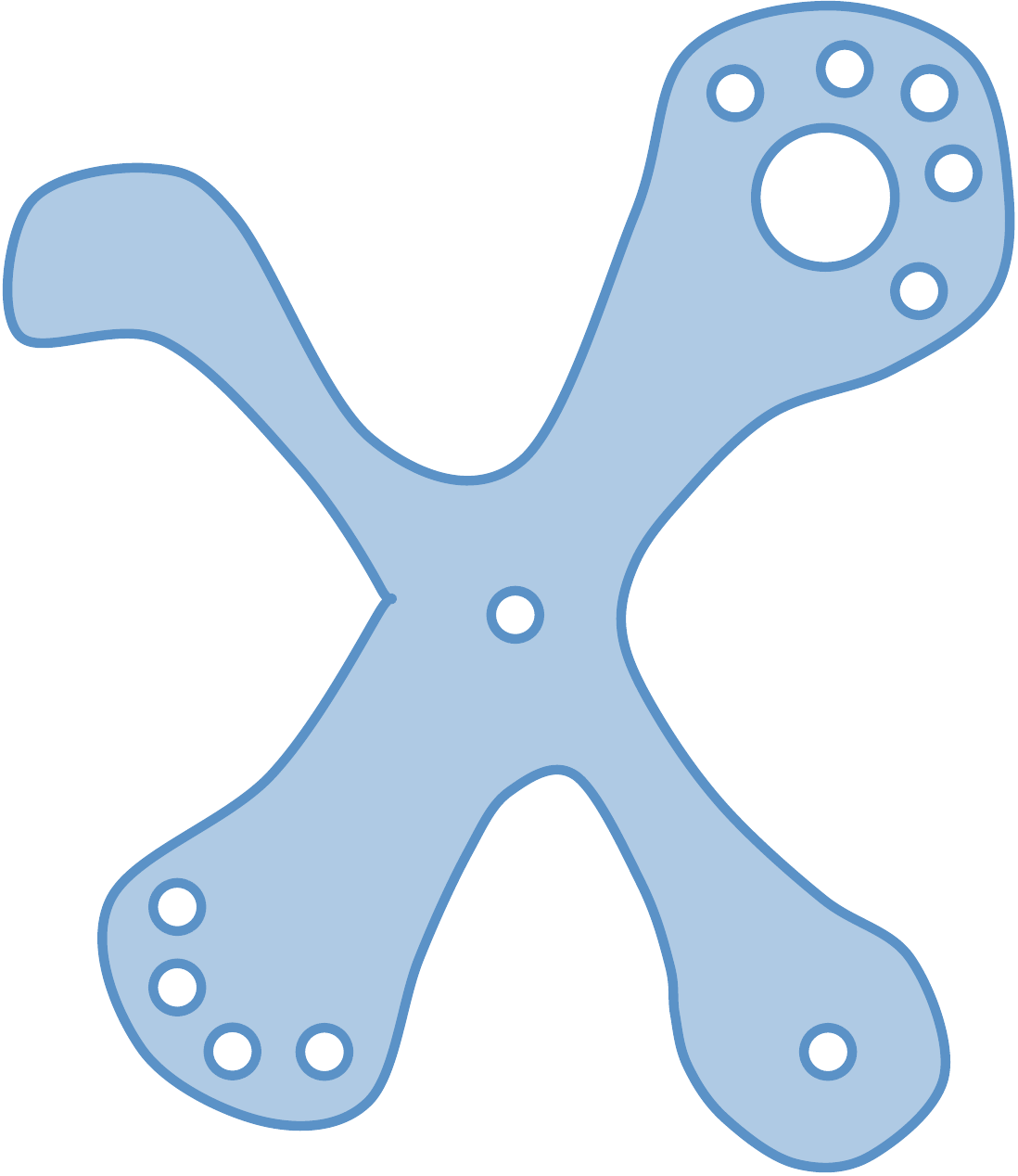}
\end{center}
\caption{A shape $X$ with four eccentric features.}
\label{fig:flare_holes}
\end{figure}

\begin{ex}[Eccentricity]

Suppose $X$ is the shape depicted in Figure \ref{fig:flare_holes}. A common feature of interest in applications~\cite{lum2013extracting} is the presence of \emph{flares} or \emph{tendrils}. Persistence provides a method for detecting such features. Consider the $p^{th}$ \textbf{eccentricity functional} on $X$:
\[
E^p(x):=\left(\int_{y\in X} d(x,y)^p dy\right)^{\frac{1}{p}}.
\]

If we filter by superlevel sets, the four endpoints of the perceived flares in Figure \ref{fig:flare_holes} will come into view. Said using homology, there are a suitable large range of values $t$ for which $E^p_{\geq t}:=\{x\in X\,|\, E^p(x)\geq t\}$ will have
\[
H_0(E^p_{\geq t}) \cong k^4.
\]
This formally expresses the four flare-like features we see in the space $X$.
\end{ex}

\begin{rmk}
When filtering by super-level sets one gets a persistence module indexed by $\R$ with its opposite total order $\leq^{op}$, so that when $s\leq t$ there is actually a map $\rho_{s,t}:H_0(E^p_{\geq t})\to H_0(E^p_{\geq s})$, but Theorem~\ref{thm:crawleyboevey} still applies.
\end{rmk}

\subsection{The Failure of Barcodes in Multi-D Persistence}

Consider again the shape in Figure~\ref{fig:flare_holes}. Suppose that we are not just interested in the number of eccentric features, but rather we are interested in holes with high eccentricity value, i.e. the persistence module
\[
H_1(E^p_{\geq t})
\]
is of interest. However, what size of hole is of interest, and what can be regarded as noise? In other words, what is the behavior of the two-parameter family of vector spaces
\[
MP_1(t,r):=H_1((E^p_{\geq t})_r)
\]
where $X_r$ denotes the set of points within distance $r$ of a subspace $X$? Extracting the general algebraic structure involved here was introduced in~\cite{carlsson2009theory}.

\begin{defn}[Multi-dimensional Persistence Module]\label{defn:multiD-module}
An \textbf{$n$-dimensional persistence module} consists of the following data: 
\begin{itemize}
\item To each point $s=(s_1,\ldots,s_n)$ in $\R^n$ a vector space $V_s$ is assigned. 
\item If $t=(t_1,\ldots,t_n)$ is another point in $\R^n$ such that $s_i\leq t_i$ for $1\leq i\leq n$ (we'll say $s\leq t$ for short), then a map of vector spaces $\rho_{t,s}:V_s\to V_t$ is assigned.
\item These maps must satisfy the property that if $r\leq s\leq t$ then $\rho_{t,r}=\rho_{t,s}\circ\rho_{s,r}$.
\end{itemize}
\end{defn}

However, as illustrated in~\cite{carlsson2009theory}, there is no higher-dimensional analog of Theorem~\ref{thm:crawleyboevey}: Not every multi-D persistence module splits as sum of constant persistence modules supported on simple pieces, like bars or their naive higher-dimensional analogs.

\section{Level Set Persistence: Towards Cosheaves}
\label{sec:level-set-persistence} 

There are many situations where the definition of a multidimensional persistence module is the correct tool for organizing data. For instance, if one has two functions of interest $f_1,f_2:X\to\R$, then taking the intersection of the sub-level sets $\{f_1(x) \leq s_1\}$ and $\{f_2(x)\leq s_2\}$ leads naturally to the 2-D persistence module
\[
  (s_1,s_2) \rightsquigarrow H_i(\{x\,|\, f_i(x)\leq s_i\,i=1,2\}).
\]
However, if one starts with a vector-valued function $f:X\to\R^2$, then it isn't clear that filtering by intersections of sub-level sets is the right method of study. In particular, if one were to post-compose the map $f:X\to\R^2$ by an isometry, one would obtain an entirely different multi-D persistence module. In short: lack of foreknowledge of the interpretations of the individual components of a vector-valued function on $X$ can severely undermine the efficacy of studying multi-D persistence.

\begin{figure}[ht]
\begin{center}
\includegraphics[width=.7\textwidth]{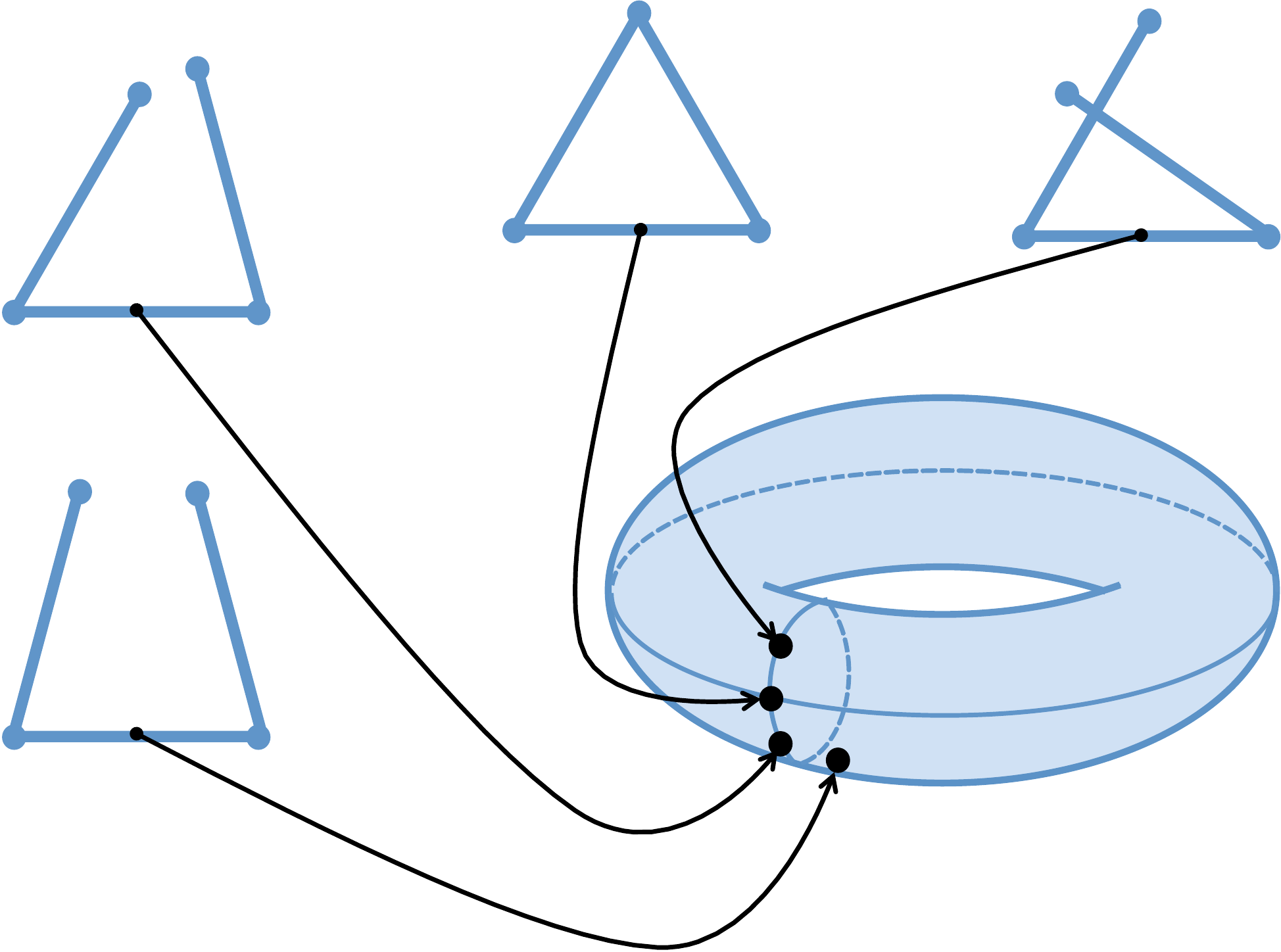}
\end{center}
\caption{A family of linkages parametrized by the torus.}
\label{fig:linkage-torus}
\end{figure}

Also, there are many situations where we want to understand how the shape of something evolves over a parameter space that is more interesting than $\R^n$, such as a space that has no natural partial order. In Figure~\ref{fig:linkage-torus} we have a linkage in the plane with two degrees of freedom corresponding to the two joints. As the angle of the two joints varies over the torus, the linkage, viewed as a subset of $\R^2$, has zero and non-zero $H_1$. How do we track the evolution of the homology as a function of the torus?

In this example, as well as several other situations that occur in data analysis~\cite{zigzag}, the natural object of study is not the homology of a sub-level set, but rather the natural object of study is the homology of the level set, or fiber, of a map $f:X\to Y$. Moreover, every sub-level set persistence problem can be cast as a level set persistence problem since we can take the sub-level sets of a map $f:X\to\R$ and construct a new space
\[
Y=\{(x,t)\in X\times\R\,|\,g(x)\leq t\}
\]
such that the fibers of the projection map $\pi:Y\to\R$ are precisely the sub-level sets of $f$. Consequently, any foundation for level-set persistence will provide a foundation for all of traditional persistence.

\subsection{Simplicial Cosheaves}\label{subsec:simplicial-cosheaves}

\begin{figure}[ht]
\begin{center}
\includegraphics[width=.7\textwidth]{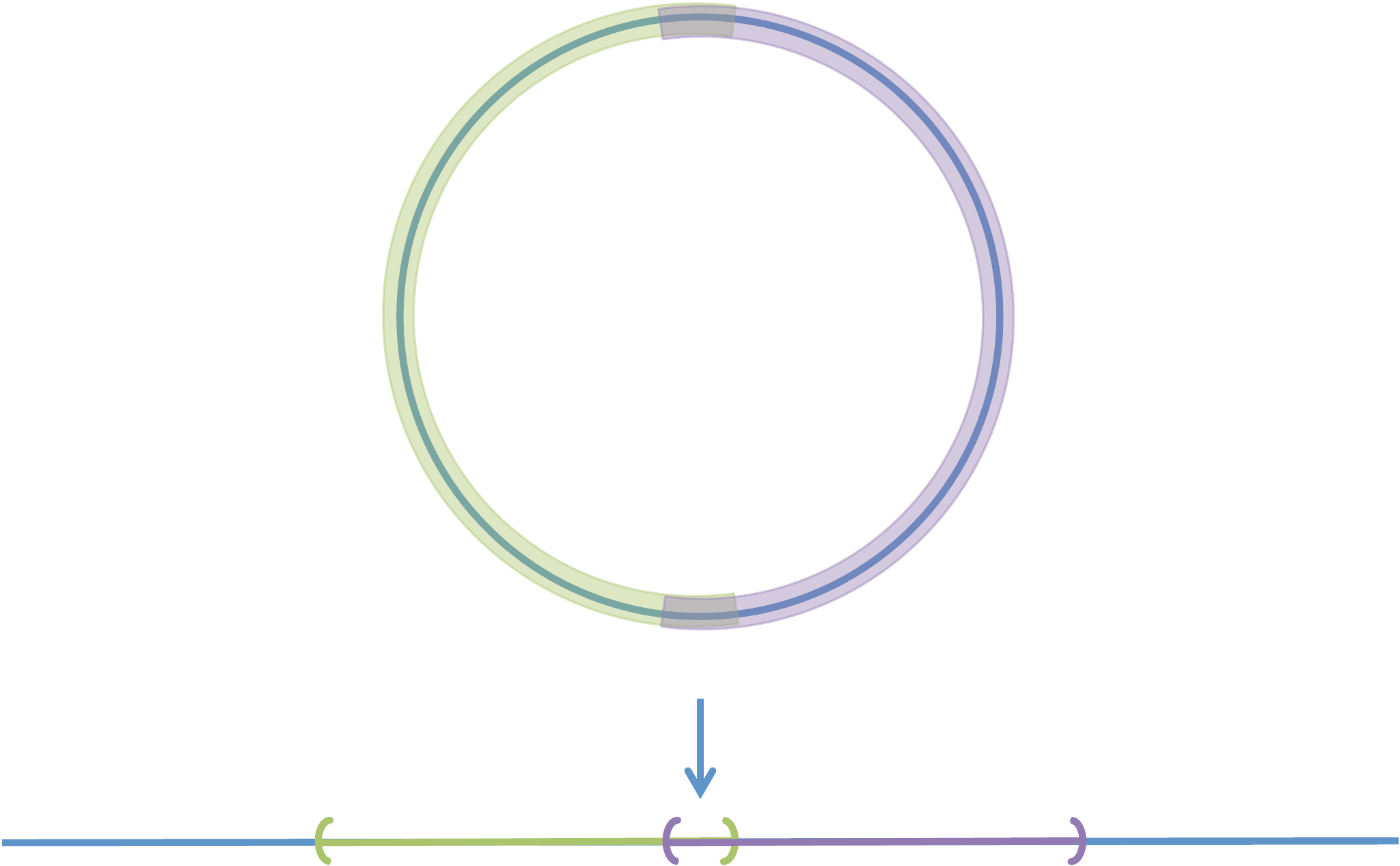}
\end{center}
\caption{A cover $\cU$ of the image of the circle $f(S^1)$ pulls back to a cover of $S^1$}
\label{fig:leray-cover-circle}
\end{figure}

The first apparent challenge of level set persistence is that one needs to relate the fibers of a map $f:X\to Y$ so that functoriality can distinguish true persistent features from spurious ones. One obvious solution is to use a cover of the image $f(X)\subseteq Y$ and then use the nerve to parametrize the homology of the pre-image. This leads to the notion of a simplicial cosheaf.

First, we make a technical observation: every simplicial complex $K$ has the structure of a partially ordered set, where one defines the partial order via inclusion of subsets of $V$, i.e.
\[
\sigma\leq\tau \Leftrightarrow \sigma\subset\tau.
\]
In the above situation one says that $\sigma$ is a \emph{face} of $\tau$.

\begin{defn}
Let $K$ be a simplicial complex. A \textbf{simplicial cosheaf over $K$} consists of an assignment of a vector space (or set) $\hF(\sigma)$ to every simplex $\sigma$ of $K$ and a map $r_{\sigma,\tau}:\hF(\tau)\to\hF(\sigma)$ for each pair of faces $\sigma\leq\tau$. The maps must satisfy $r_{\sigma,\gamma}\circ r_{\gamma,\tau}=r_{\sigma,\tau}$ whenever there is a triple of simplices $\sigma\leq\gamma\leq\tau$.
\end{defn}

\begin{ex}[Constant Cosheaf]
The assignment to every simplex in $K$ the vector space $k^n$ with identity maps between pairs of faces defines the \textbf{constant simplicial cosheaf}, named for the fact that the value of the cosheaf does not change from cell to cell.
\end{ex}

\begin{defn}[Simplicial Leray Cosheaf]
Suppose a continuous map $f:X\to Y$ is provided, as well as a cover $\cU$ of $f(X)\subseteq Y$ by open sets. For each integer $i\geq 0$ we have the \textbf{Leray simplicial cosheaf} over the nerve $N_{\cU}$ via the assignment
\[
\hF_i: \sigma \rightsquigarrow H_i(f^{-1}(U_{\sigma})).
\]
\end{defn}

\begin{ex}[Height Function on the Circle]
In Figure~\ref{fig:leray-cover-circle} we have drawn a map $f:S^1\to\R$ as well as a cover of the image. In Figure~\ref{fig:leray-circle-cosheaf} we have indicated the only Leray cosheaf of interest, where $i=0$.
\end{ex}

\begin{figure}[ht]
\begin{center}
\includegraphics[width=.7\textwidth]{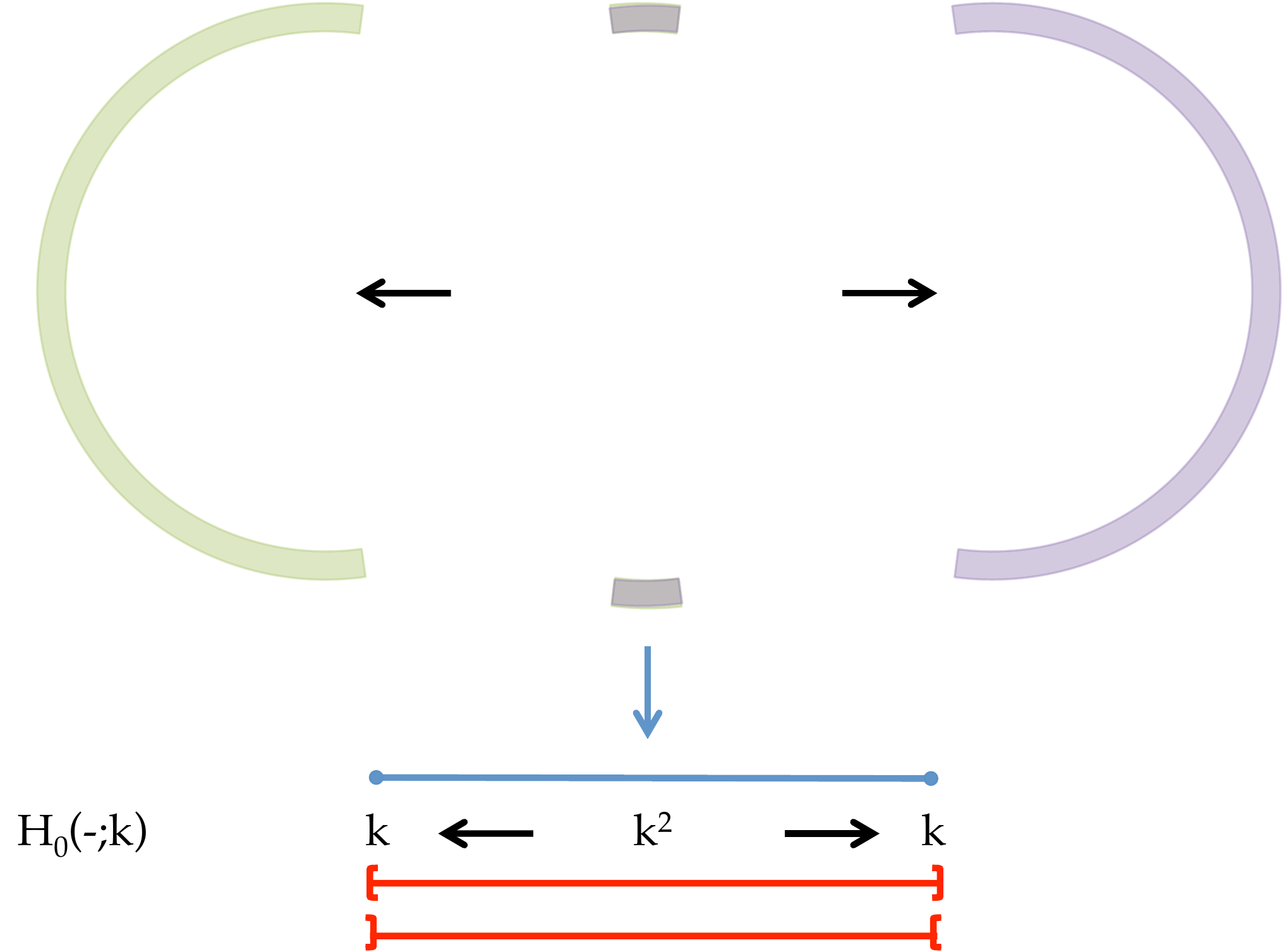}
\end{center}
\caption{The simplicial Leray cosheaf for the cover and map in Figure~\ref{fig:leray-cover-circle} and accompanying barcode, which exists by Remark~\ref{rem:undirected-bars}.}
\label{fig:leray-circle-cosheaf}
\end{figure}

\begin{rmk}[Simplicial Sheaves]\label{rmk:simplicial-sheaf}
If one uses cohomology instead of homology, then the assignment
\[
F^i: \sigma \rightsquigarrow H^i(f^{-1}(U_{\sigma}))
\]
is not a simplicial cosheaf, but rather defines a \textbf{simplicial sheaf}. The difference is small, one now has linear maps $\rho_{\tau,\sigma}:F(\sigma)\to F(\tau)$ whenever $\sigma\leq\tau$ and these satisfy the compatibility condition that whenever $\sigma\leq\gamma\leq\tau$ then $\rho_{\tau,\sigma}=\rho_{\tau,\gamma}\circ\rho_{\gamma,\sigma}$. In the constructions below, the reader may want to try dualizing a construction for simplicial cosheaves into one for simplicial sheaves.
\end{rmk}

\subsection{Homology of Barcodes via Cosheaf Homology}

One of the disturbing features of Figure~\ref{fig:leray-circle-cosheaf} is that we have no apparent way of capturing the circle's non-trivial $H_1$. This is, in fact, not true, but one needs to develop a homology theory for simplicial cosheaves in order to see why. The upshot is that \emph{data} over a simplicial complex has a homology theory and this homology can be efficiently computed~\cite{DMT_sheaves}. In the case of the simplicial Leray cosheaves associated to a map $f:X\to\R$, we can use this homology theory to gain quick computations of the true simplicial homology of the domain $X$.

Suppose we are given a simplicial complex $K$ with ordered vertices and a simplicial cosheaf $\hF$ of vector spaces over $K$. Recall that this means that to each simplex $\sigma$, we have a vector space $\hF(\sigma)$ and to each face relation $\sigma\leq\tau\in K$, we have a linear map $r_{\sigma,\tau}:\hF(\tau)\to \hF(\sigma)$. For convenience, let us adopt the following notation: if $\tau=[v_{i_0},\ldots,v_{i_p}]$, then let 
\[
\partial\tau_j=[v_{i_0},\ldots,v_{i_{j-1}},v_{i_{j+1}},\ldots,v_{i_p}]
\]
denote the $j^{th}$ face of the simplex $\tau$.

\begin{defn}
With the above notation understood, given a simplicial complex $K$ and a simplicial cosheaf $\hF$ we define the \textbf{boundary of a vector} $v\in \hF(\tau)$ by the following formula:
\[
\partial(v)=(r_{\partial\tau_0,\tau}(v),-r_{\partial\tau_1,\tau}(v),\ldots,(-1)^p r_{\partial\tau_p,\tau}(v))^{T}\in \bigoplus_{j=0}^p \hF(\partial\tau_j)
\]
\end{defn}

\begin{defn}[Simplicial Cosheaf Homology]
Given a simplicial complex $K$ and a simplicial cosheaf $\hF$, define the \textbf{group of chains valued in $\hF$} to be the direct sum of the vector spaces that $\hF$ assigns to each $p$-simplex, i.e.
\[
  C_p(K;\hF)=\bigoplus_{\tau} \hF(\tau) \qquad |\tau|=p+1.
\]
The above formula for the boundary of a vector extends to a \textbf{boundary operator}
\[
  \partial:C_{p+1}(K;\hF)\to C_p(K;\hF)
\]
that satisfies $\partial^2=0$, whence comes \textbf{simplicial cosheaf homology}:
\[
  H_p(K;\hF)= \frac{\ker \partial_p}{\im \partial_{p+1}}
\]
\end{defn}

\begin{rmk}
One can in similar fashion dualize the above constructions to define simplicial sheaf cohomology. It is unfortunate that the order of historic events has led homology to being named first and then sheaves second, because whereas sheaves have cohomology, cosheaves have homology. 
\end{rmk}

To get a handle on the above construction, let us consider cosheaf homology for the four basic simplicial cosheaves over the simplicial complex defined in Example~\ref{ex:interval}, where $K$ has three oriented simplices $[x]$, $[y]$ and $a=[x,y]$.

\begin{ex}[Closed Interval]
 Let $\hF$ be the constant cosheaf so that $\hF(x)=\hF(y)=\hF(a)=k$. The one and only boundary operator of interest is
\[
  \partial_1:\hF(a)\to \hF(x)\oplus \hF(y) \qquad \partial_1=\begin{bmatrix} 1 \\ -1\end{bmatrix}.
\]
From this we can read off the homology of $\hF$,
\[
  H_0(K;\hF)=\frac{\ker \partial_0}{\im \partial_{1}}=\frac{k^2}{k}=k \qquad H_1(K;\hF)=\frac{\ker \partial_1}{\im \partial_{2}}=\frac{0}{0}=0
\]
which agrees with the answer computed in Example~\ref{ex:interval-homology}. This agreement is obvious: simplicial cosheaf homology for the constant cosheaf $k$ is exactly the same as simplicial homology of the underlying simplicial complex.
\end{ex}

\begin{ex}[Half-Open Interval]
Consider the cosheaf $\hF$ that assigns $k$ to $x$ and $a$, but assigns $0$ to $y$. This time the boundary operator of interest is
\[
  \partial_1:k\to k \qquad \partial_1=\begin{bmatrix} 1 \end{bmatrix}.
\]
From this we can read off the homology of $\hF$:
\[
  H_0(K;\hF)=0 \qquad H_1(K;\hF)=0
\]
\end{ex}

\begin{ex}[Open Interval]
The cosheaf for this example assigns $0$ to $x$ and $y$, but $k$ to $a$. The boundary operator of interest is
\[
  \partial_1:k\to 0 \qquad \partial_1=0.
\]
From this we can read off the homology of $\hF$:
\[
  H_0(K;\hF)=0 \qquad H_1(K;\hF)=k.
\]
\end{ex}

The above computations are fundamental for the following reason. By Remark~\ref{rem:undirected-bars}, Theorem~\ref{thm:crawleyboevey} provides barcodes for simplicial cosheaves over $K$ as long as $K$ is \textbf{linear}, i.e.~$K$ is a graph where every vertex has degree at most two and contains no cycles. Consequently, we can phrase the above computations in terms of the barcode decomposition of a simplicial cosheaf over a linear complex:
\begin{quote}
\centering
{\em $H_0(K;\hF)$ counts \textbf{closed bars} and $H_1(K;\hF)$ counts \textbf{open bars}.}
\end{quote}

This observation is, at the moment, a mere curiosity. However when wedded with the following classical theorem it provides a powerful result in homology:

\begin{thm}
\label{thm:leraysheaf}
  Let $f:X \to Y$ be continuous. Assume a cover $\cU$ of the image $f(X) \subset Y$ whose nerve $N_\cU$ is at most one-dimensional, i.e.~the nerve has at most 1-simplices. For each $i\geq 0$, we have
\begin{equation*}
    H_{i}(X) \cong H_0(N_\cU;\hF_i) \oplus H_1(N_\cU;\hF_{i-1}).
\end{equation*}
\end{thm}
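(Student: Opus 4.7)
The plan is to prove the theorem as a special case of the generalized Mayer-Vietoris long exact sequence for the open cover $\{f^{-1}(U_i)\}_{i \in I}$ of $X$, the crucial simplification being that the 1-dimensionality of $N_\cU$ forces every triple overlap $f^{-1}(U_\sigma)$ with $|\sigma|=3$ to be empty. First, I would form the short exact sequence of singular chain complexes
$$0 \to \bigoplus_{|\sigma|=2} C_\bullet(f^{-1}(U_\sigma)) \xrightarrow{\alpha} \bigoplus_{|\sigma|=1} C_\bullet(f^{-1}(U_\sigma)) \to C_\bullet^\cU(X) \to 0,$$
where the sums are indexed by simplices of $N_\cU$ of the indicated cardinality, $\alpha$ is the alternating sum of the two inclusions $f^{-1}(U_{\{i,j\}}) \hookrightarrow f^{-1}(U_{\{i\}})$ and $f^{-1}(U_{\{i,j\}}) \hookrightarrow f^{-1}(U_{\{j\}})$, and $C_\bullet^\cU(X)$ is the subcomplex of singular chains with support in some $f^{-1}(U_i)$. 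The classical barycentric subdivision argument shows $C_\bullet^\cU(X) \hookrightarrow C_\bullet(X)$ is a quasi-isomorphism.

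Next, I would feed this sequence into the long exact sequence in homology. Since homology commutes with direct sums,
$$H_q\Bigl(\bigoplus_{|\sigma|=1} C_\bullet(f^{-1}(U_\sigma))\Bigr) = C_0(N_\cU; \hF_q), \qquad H_q\Bigl(\bigoplus_{|\sigma|=2} C_\bullet(f^{-1}(U_\sigma))\Bigr) = C_1(N_\cU; \hF_q),$$
and by the very definition of the Leray cosheaf the induced map $H_q(\alpha)$ is the cosheaf boundary $\partial \colon C_1(N_\cU; \hF_q) \to C_0(N_\cU; \hF_q)$. The relevant portion of the long exact sequence is therefore
$$\cdots \xrightarrow{\partial} C_0(N_\cU; \hF_i) \to H_i(X) \to C_1(N_\cU; \hF_{i-1}) \xrightarrow{\partial} C_0(N_\cU; \hF_{i-1}) \to \cdots.$$

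Because $N_\cU$ has no 2-simplices, $H_1(N_\cU; \hG) = \ker\partial$ and $H_0(N_\cU; \hG) = \cok\partial$ for any cosheaf $\hG$ on $N_\cU$. Extracting the cokernel on the left and the kernel on the right of the displayed segment yields a short exact sequence
$$0 \to H_0(N_\cU; \hF_i) \to H_i(X) \to H_1(N_\cU; \hF_{i-1}) \to 0,$$
which splits because every term is a vector space over the field $k$, giving the desired decomposition.

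The main obstacle I expect is carefully justifying the Mayer-Vietoris short exact sequence and the quasi-isomorphism $C_\bullet^\cU(X) \simeq C_\bullet(X)$ for an arbitrary open cover, together with a sign-bookkeeping check that the map induced by $\alpha$ on homology genuinely agrees with the cosheaf boundary (rather than its negative, which would merely introduce a harmless automorphism). A conceptually cleaner alternative, which I would only sketch, is to invoke the Mayer-Vietoris spectral sequence of the cover, whose $E^2$ page is $E^2_{p,q} = H_p(N_\cU; \hF_q)$ and which degenerates at $E^2$ by the column-degree hypothesis on $N_\cU$; the two surviving columns reassemble $H_i(X)$ as a two-step filtered object, and the field hypothesis ensures this filtration splits into the claimed direct sum.
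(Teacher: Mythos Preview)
The paper does not actually prove this theorem: immediately after the statement it writes ``The proof of this result is outside of the scope of this paper, but can be found in many references'' and cites McCleary's spectral sequence book along with \cite{DMT_sheaves} and \cite{sca}. Moreover, in the very next theorem the paper appeals to ``functoriality of spectral sequences (maps of filtrations induce maps between spectral sequences)'' to get induced maps, which strongly indicates that the intended background proof is the Mayer--Vietoris (or \v{C}ech) spectral sequence argument you sketch at the end of your proposal: the $E^2$ page is $H_p(N_\cU;\hF_q)$, the one-dimensionality of the nerve forces $E^2_{p,q}=0$ for $p\geq 2$, so the sequence degenerates at $E^2$ and the two surviving columns give a two-step filtration of $H_i(X)$ that splits over a field.

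Your primary argument is correct and is essentially the unrolled, elementary version of that same spectral sequence in the special case of a two-column $E^1$ page. The exactness of the augmented \v{C}ech complex of singular chains is standard, and once all triple intersections vanish the complex truncates to a genuine short exact sequence of chain complexes; the identification of $H_q(\alpha)$ with the cosheaf boundary is exactly the definition of the Leray simplicial cosheaf, up to the harmless sign you already flag. What your direct long-exact-sequence argument buys is that it avoids any mention of spectral sequences and makes the splitting over a field completely transparent; what the spectral sequence formulation buys (and what the paper implicitly relies on downstream) is a clean statement of functoriality under restriction of the cover, and a result that generalizes without change in form to nerves of higher dimension, where one gets a filtration rather than a direct-sum decomposition.
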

The proof of this result is outside of the scope of this paper, but can be found in many references~\cite{mccleary,DMT_sheaves,sca}.

Let us now compute the homology of the torus via two methods:
\begin{enumerate}
\item By computing directly the simplicial cosheaf homology of the Leray cosheaves.
\item By determining the barcodes for each of the cosheaves and applying the observation about closed and open bars.
\end{enumerate}

\begin{ex}[Height function on the Torus]

\begin{figure}[ht]
\centering
\includegraphics[width=.7\textwidth]{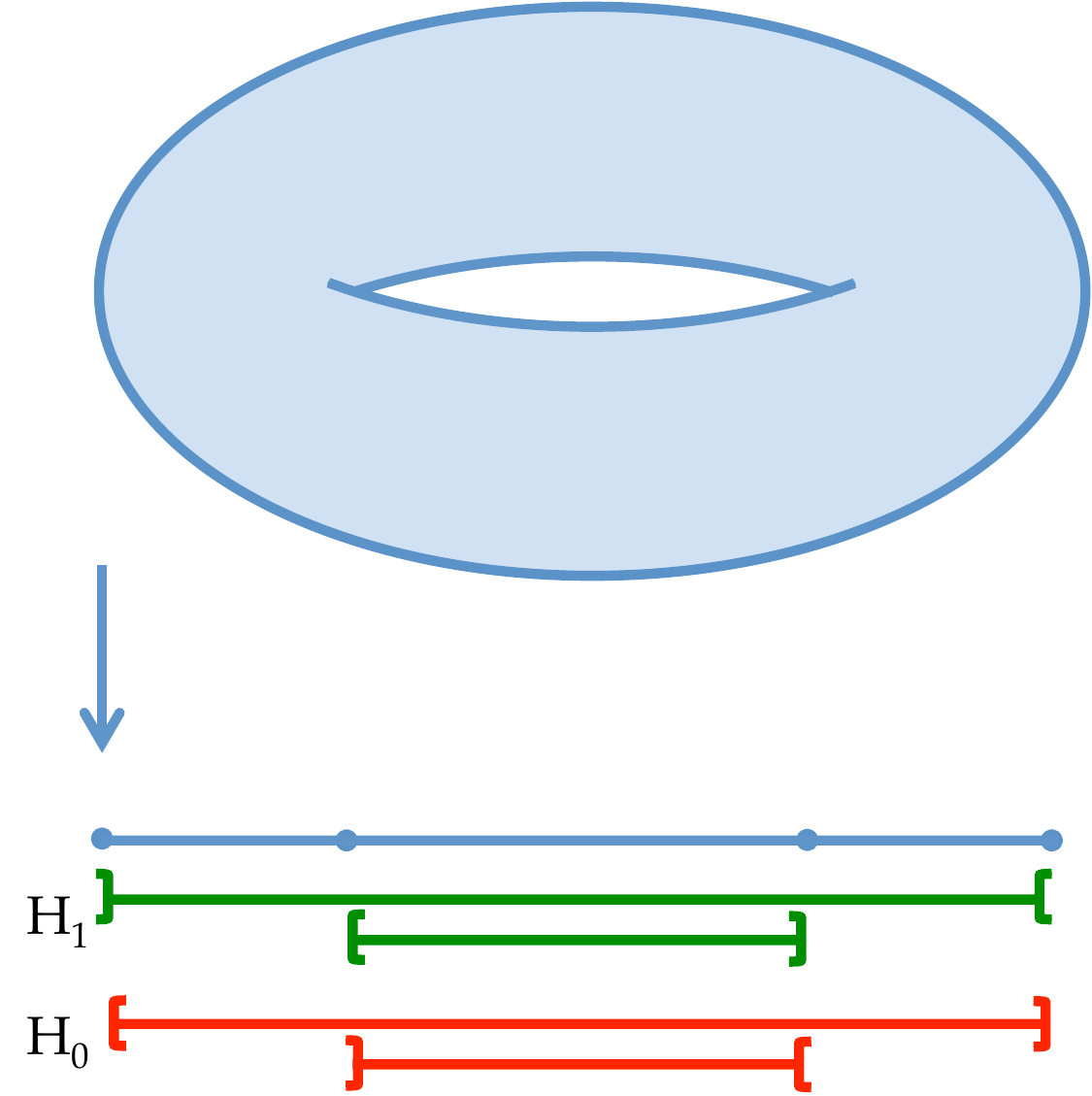}
\caption{Barcodes for Leray cosheaves coming from the height function on the torus.}
\label{fig:torus_bc}
\end{figure}

Let us now reconsider the height function on the torus $h:T\to \R$ by studying pre-images of elements of a cover. In Figure \ref{fig:torus_bc} we have omitted the cover of the image, but one can take any sufficiently large interval around each of the vertices indicated in the figure. For the sake of brevity, let us write out only the cosheaf $\hF_1$:
\[
\xymatrix{0 & \ar[l] k_a \ar[r] & k_y^2 & \ar[l] k_b^2 \ar[r] & k_z^2 & \ar[l] k_c \ar[r] & 0}
\]
Here the maps from $k_a$ to $k_y^2$ and $k_c$ to $k_z^2$ are the diagonal maps
\[
 r_{y,a}=\begin{bmatrix}1 \\ 1\end{bmatrix}=r_{z,c}
\]
and the other maps are the identity. Choosing the orientation that points to the right, we get the follow matrix representation for the boundary map:
\[
  \partial_1=\begin{bmatrix}1 & -1 & 0 & 0 \\
          1 & 0 & 1 & 0 \\
          0 & 1 & 0 & -1 \\
          0 & 0 & 1 & -1
  \end{bmatrix}
  \qquad H_1(N_{\cU};\hF_1)=<\begin{bmatrix} 1 \\ 1 \\1 \\1\end{bmatrix}> \qquad H_0(N_{\cU};\hF_1)\cong k
\]
However, if we change our bases as follows
\[
  \begin{bmatrix} y_1'= y_1 \\ y'_2=y_1+y_2\end{bmatrix} \qquad \begin{bmatrix} b_1'= b_1 \\ b'_2=b_1+b_2\end{bmatrix} \qquad \begin{bmatrix} z_1'= z_1 \\ z'_2=z_1+z_2\end{bmatrix}
\]
then our cosheaf $\hF_1$ can then be written as the direct sum of two interval modules:
\[
\xymatrix{0 & \ar[l] 0 \ar[r] & k_{y'_1} & \ar[l] k_{b'_1} \ar[r] & k_{z'_1} & \ar[l] 0 \ar[r] & 0}
\]
\[
\xymatrix{0 & \ar[l] k_a \ar[r] & k_{y'_2} & \ar[l] k_{b'_2} \ar[r] & k_{z'_2} & \ar[l] k_c \ar[r] & 0}
\]
Recalling that the latter interval module is an open bar, we can read off the homology of the torus $T$ by summing the vector spaces that lie in the same anti-diagonal slice, as described in Theorem~\ref{thm:leraysheaf}.
\[
\xymatrix{H_0(N_{\cU};\hF_1)=k \ar@{.}[rd] & H_1(N_{\cU};\hF_1)=k \\
H_0(N_{\cU};\hF_0)=k & H_1(N_{\cU};\hF_0)=k}
\]
\[
H_0(T)=k \qquad H_1(T)=k^2 \qquad H_2(T)=k
\]
\end{ex}

\subsection{Level Set Persistence Determines Sub-level Set Persistence}
\label{subsec:level-to-sublevel}

\begin{figure}
\begin{center}
\includegraphics[width=\textwidth]{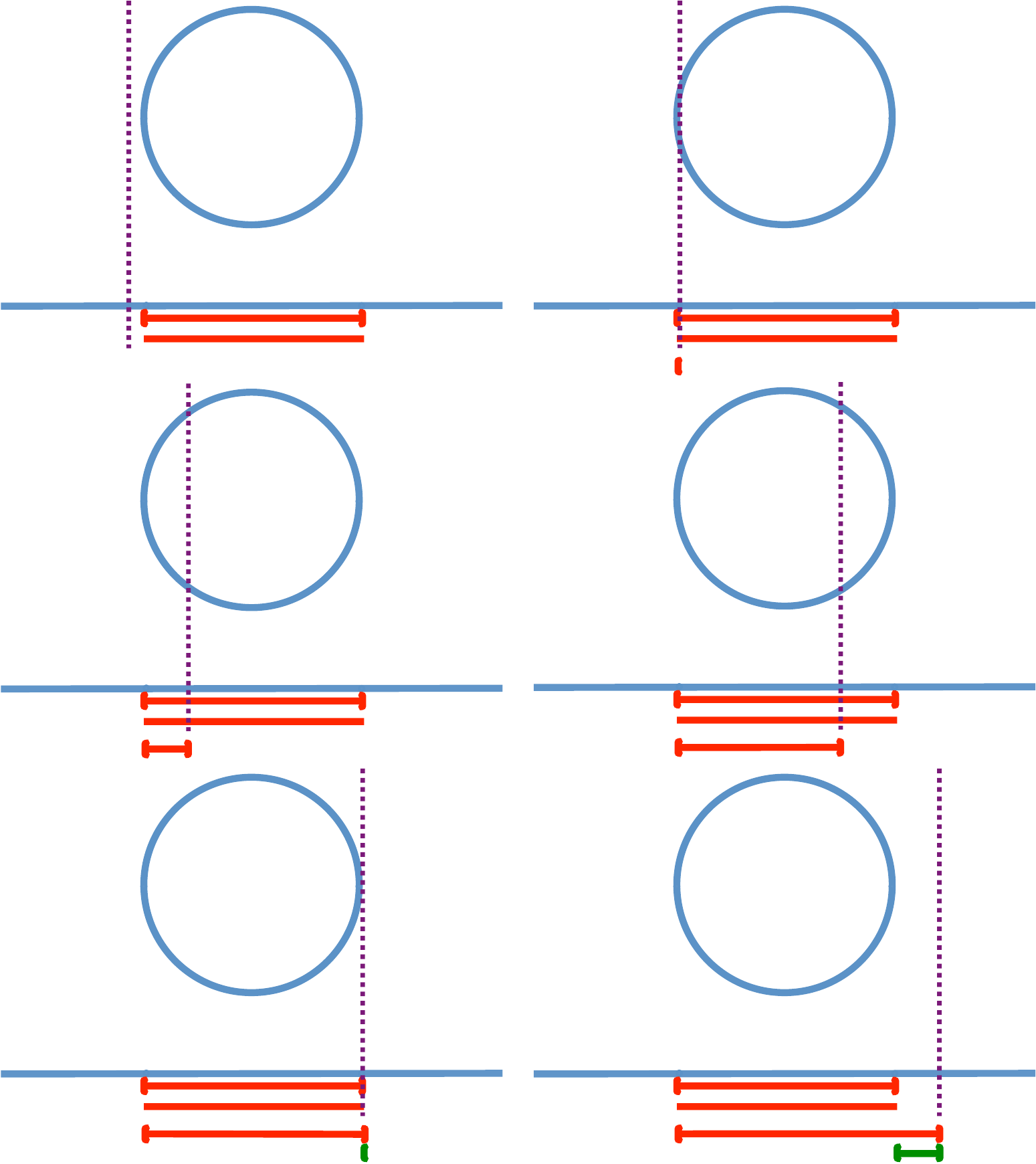}
\end{center}
\caption{Determining Sub-level Set from Level Set Persistence}
\label{fig:level_to_sub}
\end{figure}

One can also use Theorem~\ref{thm:leraysheaf} to obtain a non-obvious theorem in 1-D persistence: that level set persistence determines sub-level set persistence. By making use of the above interpretation of barcodes and cosheaf homology, we illustrate how one can take the Leray cosheaves presented as a barcode and sweep from left to right to obtain the associated sub-level set persistence module (and its barcode in certain situations). An example is drawn in Figure \ref{fig:level_to_sub}. Stated formally, we have the following theorem.

\begin{thm}
Suppose $X$ is compact and $f:X\to Y\subset \R$ is continuous. Given a cover $\cU$ of the image with linear nerve and associated simplicial Leray cosheaves $\hF_i$, one can recover the sub-level set persistence module of $f$ for any choice of $t_0< \cdots < t_n$ and integer $i\geq 0$ as follows:
\begin{enumerate}
\item For each $t_j$ take the intersection of elements in $\cU$ with the interval $(-\infty,t_j]$ to form the restricted cosheaves $\hF_i|_{(-\infty,t_j]}$ and $\hF_{i-1}|_{(-\infty,t_j]}$.
\item The persistence module in degree $i$ is then determined pointwise at $t_j$ by
\[
  H_i(f^{-1}(-\infty,t_j])\cong H_0(N_{\cU\cap(-\infty,t_j]};\hF_i)\oplus H_1(N_{\cU\cap(-\infty,t_j]};\hF_{i-1}).
\]
\end{enumerate}
\end{thm}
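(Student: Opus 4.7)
The plan is to reduce the theorem to a direct application of Theorem~\ref{thm:leraysheaf} applied to the restricted map $f_j := f|_{X_{t_j}} \colon X_{t_j} \to Y \cap (-\infty, t_j]$, where $X_{t_j} := f^{-1}(-\infty, t_j]$. The compactness of $X$ makes $X_{t_j}$ compact as a closed subset of $X$, and continuity of $f_j$ is inherited. So the strategy is: for each $t_j$ verify that $\cU_j := \cU \cap (-\infty, t_j]$ is a legitimate cover of the image of $f_j$ whose nerve is still linear, and then identify the restricted Leray cosheaves on $N_{\cU_j}$ with the Leray cosheaves of $f_j$ with respect to $\cU_j$. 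Once these identifications are in place, Theorem~\ref{thm:leraysheaf} delivers the displayed isomorphism.

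First I would verify linearity of the restricted nerve. Order the cover $\cU = \{U_0, \ldots, U_n\}$ so that $U_i \cap U_{i+1} \neq \varnothing$ and $U_i \cap U_k = \varnothing$ whenever $|i-k| > 1$; this is possible precisely because $N_{\cU}$ is a path graph. Intersecting each $U_i$ with the lower set $(-\infty, t_j]$ either preserves $U_i$, truncates it, or makes it empty, but cannot introduce new incidences. Hence $N_{\cU_j}$ is a sub-graph of a path, so still linear. Next, one unfolds the definitions: for a simplex $\sigma \in N_{\cU_j}$ the corresponding intersection is $(U_\sigma)_j := U_\sigma \cap (-\infty, t_j]$, and
\[
f_j^{-1}\bigl((U_\sigma)_j\bigr) = f^{-1}(U_\sigma) \cap X_{t_j} = f^{-1}\bigl((U_\sigma)_j\bigr),
\]
so that the Leray cosheaf $\hF_i|_{(-\infty,t_j]}$ of $f_j$ agrees on the nose with the restriction of $\hF_i$ to the truncated nerve. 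The boundary maps of the cosheaf chain complex match because the face relations on $N_{\cU_j}$ are inherited from $N_{\cU}$.

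With these pieces assembled, Theorem~\ref{thm:leraysheaf} applied to $f_j$ and $\cU_j$ yields exactly
\[
H_i\bigl(f^{-1}(-\infty,t_j]\bigr) \cong H_0(N_{\cU_j};\hF_i|_{(-\infty,t_j]}) \oplus H_1(N_{\cU_j};\hF_{i-1}|_{(-\infty,t_j]}),
\]
which is the claimed pointwise decomposition. The main obstacle, and the only point that requires genuine care, is the verification that the restricted nerve remains linear together with the compatibility between the restricted cosheaf on $N_{\cU_j}$ and the Leray cosheaf of $f_j$: both rely on the fact that $(-\infty, t_j]$ is a lower set in $\R$, so intersecting with it commutes with taking preimages and does not create new simplicial incidences. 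All other ingredients (compactness of the truncated domain, continuity of the restriction, applicability of Theorem~\ref{thm:leraysheaf}) are routine. If one wishes to promote this to an isomorphism of persistence modules, one need only observe that the inclusions $X_{t_j} \hookrightarrow X_{t_{j+1}}$ are compatible with the corresponding cosheaf restriction morphisms $\hF_i|_{(-\infty, t_{j+1}]} \to \hF_i|_{(-\infty, t_j]}$ (via extension by zero/restriction), and that the isomorphism in Theorem~\ref{thm:leraysheaf} is natural in the map-and-cover data.
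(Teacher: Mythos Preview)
Your argument is essentially the paper's own: apply Theorem~\ref{thm:leraysheaf} to the restricted map $f|_{X_{t_j}}$ with the truncated cover $\cU\cap(-\infty,t_j]$, and then recover the persistence structure from the inclusions $U_\sigma\cap(-\infty,t_j]\hookrightarrow U_\sigma\cap(-\infty,t_{j+1}]$. Two small points: the induced cosheaf morphism should point $\hF_i|_{(-\infty,t_j]}\to\hF_i|_{(-\infty,t_{j+1}]}$ (not the reverse, as you wrote), and where you invoke naturality of Theorem~\ref{thm:leraysheaf} as a black box, the paper makes this explicit by appealing to functoriality of the spectral sequence that underlies that theorem.
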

\begin{proof}
One must first observe that Theorem~\ref{thm:leraysheaf} holds over the restriction.
\[
  \xymatrix{f^{-1}(-\infty,t_i] \ar[r] \ar[d] & X \ar[d]^f \\
  (-\infty,t_i] \ar[r] & Y}
\]
This proves that the $i^{th}$ homology of the sub-level set can be computed via cosheaf homology. Now we must show that one can recover functoriality from the cosheaf perspective. If $\sigma\in N_{\cU}$ is a simplex in the nerve and if $t< t'$, then there is a map
\[
U_{\sigma}\cap (-\infty,t]\hookrightarrow U_{\sigma}\cap (-\infty,t'].
\]
This implies that there is a map $\hF_i(U_{\sigma}\cap (-\infty,t])\to \hF_i(U_{\sigma}\cap (-\infty,t'])$ and thus a map from chains valued in $\hF_i|_{(-\infty,t]}$ to chains valued in $\hF_i|_{(-\infty,t']}$. By functoriality of spectral sequences (maps of filtrations induce maps between spectral sequences) we get the desired map on homology.
\end{proof}

\section{Sheaves as the Correct Foundation for Level Set Persistence}

At the beginning of Section~\ref{subsec:simplicial-cosheaves}, we made a first attempt at defining level set persistence by taking a cover $\cU$ of the image of $f:X\to Y$ and studying simplicial Leray cosheaves over the nerve $N_{\cU}$. However, a problem emerges: Suppose we use a different cover $\cU'$ of the image. Is there any way of comparing the Leray simplicial cosheaves over two different nerves? Of course one could always refine the two covers $\cU$ and $\cU'$ to a common cover, but it would be convenient for proving theorems to work with all open sets at once. This leads to the general notion of a cosheaf, which is the dual notion of a sheaf. At this point we introduce a little category theory to facilitate the discussion.

\subsection{Categories and Functors}

We have used the notion of functoriality in a rather restricted way and this is how it was for much of the first part of the $20^{th}$ century. Finally in 1945, Samuel Eilenberg and Saunders Mac Lane introduced the notion of a category to make the term ``functorial'' precise and more widely applicable~\cite{em-cat45}. It has since become apparent that the language of categories provides a useful way of identifying formal similarities throughout mathematics. The success of this perspective is largely due to the fact that category theory --- as opposed to set theory --- emphasizes the relationships between objects rather than the objects themselves.

\begin{defn}[Category]\index{category}
A \textbf{category} $\cat$ consists of a class of objects $\obj(\cat)$ and a set of morphisms $\Hom_{\cat}(a,b)$ between any two objects $a,b\in\obj(\cat)$. An individual morphism $f:a\to b$ is also called an arrow since it points from $a$ to $b$. We require that the following axioms hold:
\begin{itemize}
  \item Two morphisms $f\in \Hom_{\cat}(a,b)$ and $g\in\Hom_{\cat}(b,c)$ define a third morphism $g\circ f\in \Hom_{\cat}(a,c)$, called the composition of $f$ and $g$.
  \item Composition is associative, i.e. if $h\in\Hom(c,d)$, then $(h\circ g)\circ f=h \circ (g\circ f)$.
  \item For each object $x$ there is an identity morphism $\id_x\in\Hom_{\cat}(x,x)$ that satisfies $f\circ \id_a=f$ and $\id_b\circ f=f$.
\end{itemize}
When the category $\cat$ is understood, we will sometimes write $\Hom(a,b)$ to mean $\Hom_{\cat}(a,b)$.
\end{defn}

\begin{ex}[Poset]
Any partially-ordered set $(Q,\leq)$ defines a category by letting the objects be the elements of $Q$ and by declaring each set $\Hom(x,y)$ to either have a unique morphism if $x\leq y$ or to be empty if $x\nleq y$. The transitivity axiom for partially ordered sets is expressed categorically via composition of morphisms. Associativity comes from there being a unique morphism between $x$ and $y$ when $x\leq y$. The existence of identities comes from the reflexivity axiom of a poset, namely that $x\leq x$. The anti-symmetry axiom of a poset ($x\leq y$ and $y\leq x$ implies $x=y$) is unnecessary from the categorical viewpoint and offers a natural point of generalization.
\end{ex}

\begin{ex}[Open Set Category]
 The \textbf{open set category} associated to a topological space $X$, denoted $\Open(X)$, has as objects the open sets of $X$ and a unique morphism $U\to V$ for each pair related by inclusion $U\subseteq V$.
\end{ex}

\begin{ex}
$\Vect$ is the category whose objects are vector spaces and whose morphisms are linear maps.
\end{ex}

\begin{ex}[Opposite Category]
 For any category $\cat$ there is an \textbf{opposite category} $\cat^{op}$ where all the arrows have been turned around, i.e. $\Hom_{\cat^{op}}(x,y)=\Hom_{\cat}(y,x)$.
\end{ex}

\begin{rmk}[Duality and Terminology]
Because one can always perform a general categorical construction in $\cat$ or $\cat^{op}$ every concept is really two concepts. This causes a proliferation of ideas and is sometimes referred to as the \textbf{mirror principle}. The way this affects terminology is that a construction that is dualized is named by placing a ``co'' in front of the name of the un-dualized construction. Thus there are limits and colimits, products and coproducts, equalizers and coequalizers, and many more constructions.
\end{rmk}

\begin{defn}[Functor]
  A \textbf{functor} $F:\cat\to\dat$ consists of the following data: To each object $a\in\cat$ an object $F(a)\in\dat$ is associated, i.e. $a\rightsquigarrow F(a)$. To each morphism $f:a\to b$ in $\cat$ a morphism $F(f):F(a)\to F(b)$ in $\dat$ is likewise associated. We require that the functor respect composition and preserve identity morphisms, i.e. $F(f\circ g)=F(f)\circ F(g)$ and $F(\id_a)=\id_{F(a)}$. For such a functor $F$, we say $\cat$ is the \textbf{domain} and $\dat$ is the \textbf{codomain} of $F$.
\end{defn}

\subsection{Pre-Cosheaves are Functors}


\begin{defn}[Pre-Cosheaves and Pre-Sheaves]
Any functor $$\hF:\Open(X)\to\dat$$ is called a \textbf{pre-cosheaf} valued in $\dat$. We will work exclusively with pre-cosheaves of vector spaces, so that $\dat=\Vect$. This terminology comes from dualizing a \textbf{pre-sheaf}, which is any functor $F:\Open(X)^{op}\to\dat$. Some further terminology is warranted: If $V\subset U$, then we usually write the \textbf{restriction maps} of a sheaf as $\rho_{V,U}^F:F(U)\to F(V)$ and the \textbf{extension maps} of a cosheaf as $r^{\hF}_{U,V}:\hF(V)\to\hF(U)$. Often we omit the superscript $F$ or $\hF$.
\end{defn}

\begin{rmk}
The prefix ``pre'' indicates that there is a more mature notion of a ``sheaf'' and a ``cosheaf.'' These notions are described precisely later in the paper.
\end{rmk}

\begin{defn}[Leray Pre-Cosheaf]\label{defn:leray-pre-cosheaf}
Given a continuous map $f:X\to Y$ and an integer $i\geq 0$, one has the \textbf{Leray pre-cosheaf}:
\[
\hP_i:U\subset Y \rightsquigarrow H_i(f^{-1}(U))
\]
Dually, one has the \textbf{Leray pre-sheaf}:
\[
P^i:U\subset Y \rightsquigarrow H^i(f^{-1}(U))
\]
\end{defn}

\begin{rmk}[A Contrasting Approach]
One approach to defining the level set persistence of a map $f:X\to Y$ is outlined in~\cite{bubenik2013metrics}. There one considers the collection of all subsets of $Y$ as a partially-ordered set and hence a category. There one defines the $i^{th}$ level set persistence to be the functor
\[
Z\subseteq Y \rightsquigarrow H_i(f^{-1}(Z)).
\]
This approach is closely connected with the Leray pre-cosheaves presented here except that one works only with the collection of open subsets of $Y$.
\end{rmk}

\begin{figure}[ht]
\centering
\includegraphics[width=.7\textwidth]{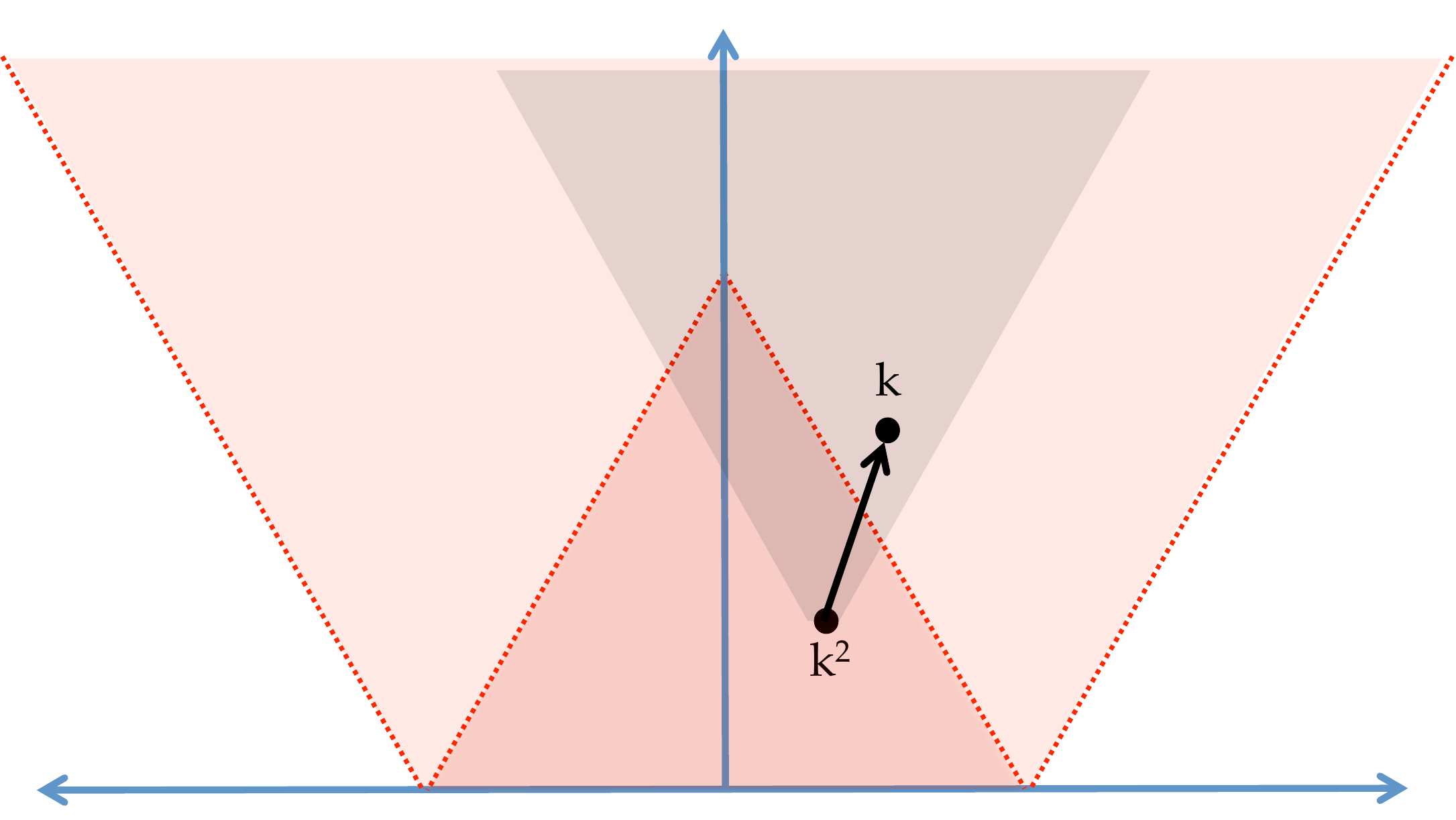}
\caption{Visualizing the Leray pre-cosheaf $H_0$ for the height function on the circle, originally considered in Figure~\ref{fig:leray-cover-circle}.}
\label{fig:light-cone-circle-h0}
\end{figure}

\begin{ex}[Height Function on the Circle]
Let $f:S^1\to\R$ be the function that projects $C=\{(x,y)\in\R^2\,|\, x^2+y^2=1\}$ onto the $x$-axis. For each open set $U$ in $\R$, $\hP_i$ assigns the $i^{th}$ homology group $H_i(f^{-1}(U))$ to $U$. Let us restrict our functor to the category of bounded open intervals $\Int(\R)$, since they generate all of $\Open(\R)$. Note that $\Int(\R)$ can be visualized as the upper half-plane $\mathbb{H}_+=\{(m,r)\,|\,m\in\R, r>0\}$ by letting each point $(m,r)$ represent the midpoint and radius of an interval $I\subset\R$:
\[
m(I)=\frac{x+y}{2} \qquad r(I)=\frac{y-x}{2}
\]
The partial order $I\leq J\Leftrightarrow I\subseteq J$ is then equivalent to the partial order on $\mathbb{H}_+$ where $(m,r)\leq (m',r')$ if and only if $|m'-m|\leq r'-r$. Thus, for maps to the real line, the Leray pre-cosheaf $\hP_i$ assigns to each point in the upper-half plane the vector space $H_i(f^{-1}(I))$, and to each pair of inclusions $I\leq J$ the map $H_i(f^{-1}(I))\to H_i(f^{-1}(J))$. For $i=0$ and the height function on the circle, this assignment is depicted in Figure~\ref{fig:light-cone-circle-h0}.
\end{ex}

\begin{rmk}
This method of visualizing the Leray pre-cosheaf is loosely inspired by the landmark paper on level set persistence~\cite{pyramid}.
\end{rmk}

\subsection{Obtaining Fibers via Stalks} 

One apparent disadvantage that Leray pre-cosheaves have is the restriction to open sets $U$ prohibits directly recording the homology of the fiber $f^{-1}(y)$. However, there is a categorical construction that can be used in some cases to derive $H_i(f^{-1}(y))$ from the homology groups $H_i(f^{-1}(U))$. Moreover, this construction will work even better when we dualize to cohomology, which motivates the use of Leray pre-sheaves.

\begin{defn}[Limit]
  The \textbf{limit} of a functor $F:\Iat\to\cat$ is an object $\varprojlim F\in\cat$ along with a collection of morphisms $\psi_x:\varprojlim F\to F(x)$ that commute with arrows in the diagram of $F$, i.e.~if $g:x\to y$ is a morphism in $\Iat$, then $\psi_y=F(g)\circ\psi_x$ in $\cat$.

  We require that the limit is universal in the following sense: if there is another object $L'$ and morphisms $\psi'_x$ that also commute with arrows in $F$, then there is a unique morphism $u:L'\to \varprojlim F$ that commutes with everything in sight, i.e. $\psi_x'=\psi_x\circ u$ for all objects $x$ in $\Iat$.
    \[
    \xymatrix{
    & L' \ar@{.>}[d]^{\exists!}_u \ar[ddl]_{\psi'_x} \ar[ddr]^{\psi'_y} & \\
    & \varprojlim F \ar[dl]^{\psi_x} \ar[dr]_{\psi_y}  & \\
    F(x) \ar[rr]_{F(g)} & & F(y) 
    }
  \]
\end{defn} 

\begin{ex}
Let $\Iat$ be the category of open sets $U$ that contain a point $y\in Y$ with morphisms corresponding to inclusions, which we call $\Open(Y)_y$. The limit of the restricted functor $\hP_i:\Open(Y)_y\to\Vect$ is called the \textbf{costalk} of $\hP_i$ at $y$. Unfortunately, for a general continuous map it is unknown how the costalk at $y$ is related to the homology of the fiber $f^{-1}(y)$. The technical reason for this is that limits and homology do not commute~\cite[Prop.~2.5.19]{sca}. This is one traditional reason why many mathematicians prefer pre-sheaves over pre-cosheaves.
\end{ex}

\begin{defn}[Colimit]
  The \textbf{colimit} of a functor $F:\Iat\to\cat$ is defined in a dual manner.
  \[
  \xymatrix{F(x) \ar[rr]^{F(g)} \ar[dr]^{\phi_x} \ar[ddr]_{\phi'_x} & & F(y) \ar[dl]_{\phi_y} \ar[ddl]^{\phi'_y} \\ & \varinjlim F  \ar@{.>}[d]^{\exists!}_u & \\ & C'  &}
  \]
\end{defn}

\begin{ex}[Stalk]
Given a pre-sheaf $F:\Open(Y)^{op}\to \Vect$ and a point $y\in Y$ the \textbf{stalk} at $y$ is defined to be the colimit of $F$ over open sets containing $y$:
\[
  F_y:=\varinjlim_{U\ni y} F(U)
\]
\end{ex}

In contrast to the Leray pre-cosheaves, the Leray pre-sheaves are traditionally considered better behaved by the following theorem.

\begin{thm}[Thm.~6.2~\cite{iversen}]
Suppose $f:X\to Y$ is a proper map between locally compact spaces. For any point $y\in Y$ we have
\[
P^i_y\cong H^i(f^{-1}(y)).
\]
\end{thm}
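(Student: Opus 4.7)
The plan is to factor the stalk computation through the collection of open neighborhoods of the compact fiber $f^{-1}(y)$ in $X$, and then appeal to a continuity (tautness) property of cohomology.

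First I would unwind the definition: by construction,
\[
P^i_y \;=\; \varinjlim_{U \ni y} H^i(f^{-1}(U)),
\]
where $U$ ranges over open neighborhoods of $y$ in $Y$. The idea is to compare this filtered colimit with
\[
\varinjlim_{W \supseteq f^{-1}(y)} H^i(W),
\]
where $W$ ranges over all open neighborhoods of the fiber in $X$, ordered by reverse inclusion.

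The first key step is a cofinality argument using properness. I would show that the subsystem $\{f^{-1}(U) : U\ni y\text{ open}\}$ is cofinal in the system of all open neighborhoods $W$ of $f^{-1}(y)$. Given any such $W$, the set $X\setminus W$ is closed in $X$; because $f$ is proper between locally compact Hausdorff spaces, $f$ is a closed map, so $f(X\setminus W)$ is closed in $Y$ and does not contain $y$. Hence $U := Y\setminus f(X\setminus W)$ is an open neighborhood of $y$ with $f^{-1}(U)\subseteq W$. Cofinality identifies the two filtered colimits above.

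The second key step is the tautness statement: for the compact subspace $f^{-1}(y)$ of the locally compact Hausdorff space $X$, cohomology is continuous along neighborhoods, i.e.
\[
H^i(f^{-1}(y)) \;\cong\; \varinjlim_{W\supseteq f^{-1}(y)} H^i(W).
\]
This is the standard tautness of compact subsets in sheaf/\v{C}ech cohomology (cf.~Iversen, from which the theorem statement is taken), and is where the hypotheses that $f$ is proper (so $f^{-1}(y)$ is compact) and that $X,Y$ are locally compact get used in an essential way. Combining the two steps gives
\[
P^i_y \;=\; \varinjlim_{U\ni y} H^i(f^{-1}(U)) \;\cong\; \varinjlim_{W\supseteq f^{-1}(y)} H^i(W) \;\cong\; H^i(f^{-1}(y)),
\]
which is the desired isomorphism, and one should verify that the canonical restriction maps $H^i(f^{-1}(U))\to H^i(f^{-1}(y))$ assemble into the comparison morphism, so that the abstract isomorphism is the natural one.

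The main obstacle is the tautness step: it is genuinely non-formal, relying on the interplay between local compactness, the chosen flavor of cohomology, and the fact that filtered colimits commute with cohomology only under real hypotheses (this is precisely what the preceding remark in the paper warned about for pre-cosheaves). The cofinality step, by contrast, is a short topological argument using that proper maps to Hausdorff spaces are closed. One could also give a cleaner proof by recognizing $P^i_y$ as the stalk of the derived pushforward sheaf $R^i f_* \underline{k}$ and quoting proper base change, but I would prefer the direct route above since it isolates exactly where properness and local compactness enter.
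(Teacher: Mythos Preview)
Your argument is correct and more self-contained than what the paper does. The paper's proof is simply a citation: it invokes Iversen's Theorem~6.2, which computes the stalk of the \emph{sheafification} of $P^i$ (i.e.~the Leray sheaf $R^if_*\underline{k}$), and then remarks that sheafification preserves stalks, so the same formula holds for the pre-sheaf $P^i$. Your approach instead unpacks the content of that citation directly: cofinality of $\{f^{-1}(U)\}$ among open neighborhoods of the fiber (via the closedness of proper maps), followed by tautness of the compact fiber. This is essentially the machinery inside Iversen's proof, so the two routes are not independent, but yours has the pedagogical virtue of isolating exactly where properness and local compactness enter, while the paper's has the virtue of brevity. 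Your final parenthetical about recognizing $P^i_y$ as the stalk of $R^if_*\underline{k}$ and quoting proper base change is in fact precisely the paper's route.
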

\begin{proof}
The bulk of the proof appears in Theorem 6.2 of~\cite[pp.~176-7]{iversen} where it is proved for the sheafification of $P^i$, which we will describe shortly. One can then observe that sheafification preserves stalks to get the desired result.
\end{proof}

\subsection{Local to Global Properties of the (Co)Sheaf Axiom}

If a topological space is equipped with a cover $\cU=\{U_i\}_{i\in I}$ and a pre-cosheaf $\hF$, then we can define a simplicial cosheaf over $N_{\cU}$ by restricting the assignment of $\hF$ to only those open sets (and their intersections) appearing in $\cU$:
\[
  \hF:\sigma \rightsquigarrow \hF(U_{\sigma})
\]
One can then compute simplicial cosheaf homology of $\hF$ on this cover, which is also called the \textbf{\v{C}ech homology of $\hF$}:
\[
  H_0(N_{\cU};\hF) \qquad H_1(N_{\cU};\hF) \qquad H_2(N_{\cU};\hF) \qquad \cdots
\]
The first term $H_0(N_{\cU};\hF)$ is used to define the cosheaf axiom, and its mirror term $H^0(N_{\cU};F)$ is used to define the sheaf axiom.

\begin{defn}\label{defn:cosheaf-axiom}
A pre-cosheaf $\hF$ of vector spaces is a \textbf{cosheaf} if for every open set $U$ and every cover $\cU$ of $U$ $$\hF(U)\cong H_0(N_{\cU};\hF).$$

Dually, a pre-sheaf $F$ of vector spaces is a \textbf{sheaf} if for every open set $U$ and every cover $\cU$ of $U$ $$F(U)\cong H^0(N_{\cU};F).$$
\end{defn}

\begin{rmk}[Local to Global]
It is often said that sheaves mediate the passage from local to global. This means that the value of $F(U)$ (the global datum) is completely determined by the values of $\{F(U_i)\}$ (the local data) where $\cU=\{U_i\}$ is a cover of $U$. This perspective has powerful implications for parallel processing; in essence, the (co)sheaf axiom is a distributed algorithm.
\end{rmk}

The first observation one can make about the cosheaf axiom is that if $U=U_1\cup U_2$ where $U_1\cap U_2=\varnothing$ and $\hF$ is a cosheaf, then $\hF(U)\cong \hF(U_1)\oplus \hF(U_2)$. Many pre-cosheaves satisfy this property without being cosheaves themselves. For example, each of the Leray pre-cosheaves $\hP_i$ satisfy this property without being cosheaves themselves.

\begin{figure}[ht]
\centering
\includegraphics[width=\textwidth]{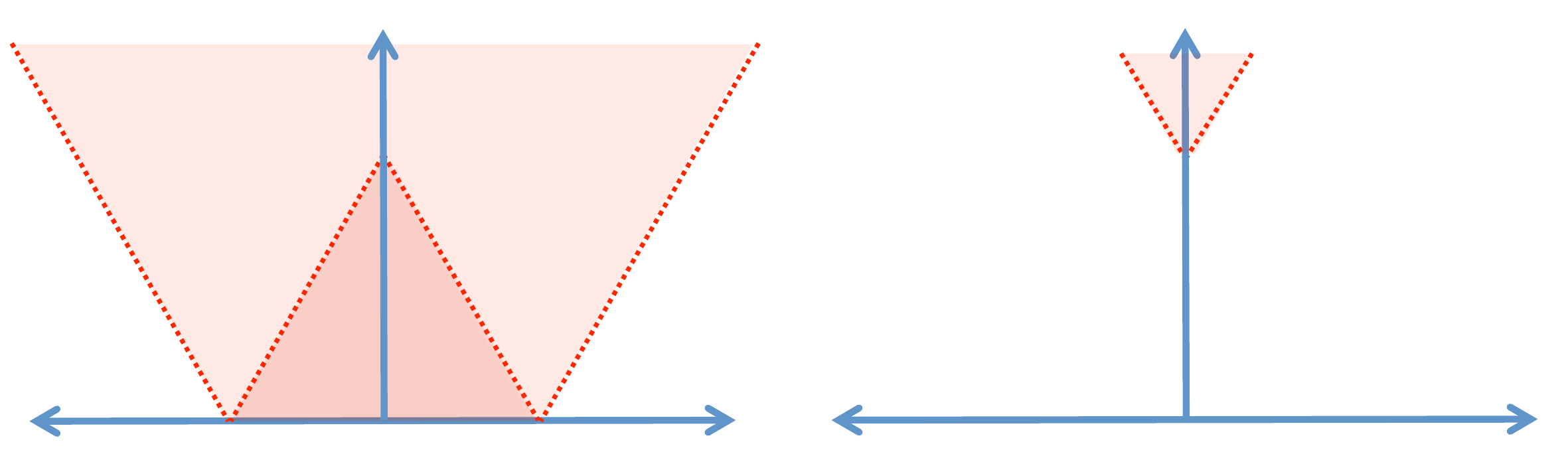}
\caption{The two Leray pre-cosheaves $\hP_0$ and $\hP_1$ for the height function on the circle. The figure on the right is an example of a pre-cosheaf that is not a cosheaf.}
\label{fig:light-cone-circle-h0h1}
\end{figure}

\begin{ex}[$\hP_1$ is not a cosheaf]
In Figure~\ref{fig:light-cone-circle-h0h1} we consider side-by-side the two non-zero Leray pre-cosheaves associated to the height function on the circle $f:S^1\to\R$. The pre-cosheaf $\hP_1$ fails to be a cosheaf because if one takes any cover $\cU=\{U_i\}$ of $f(S^1)$ by open sets where no single open set contains the entire image, then the pre-cosheaf $\hP_1$ restricts to a collection of zero vector spaces and zero maps over the nerve $N_{\cU}$. One immediately has that
\[
\hP_1(\cup U_i)=k\neq H_0(N_{\cU};\hP_1)=0,
\]
which is required in order for $\hP_1$ to be a cosheaf. On the other hand, $\hP_0$ is always a cosheaf.
\end{ex}

\begin{ex}[$\hP_0$ is a cosheaf]
Suppose $f:X\to Y$ is a continuous map. The Leray pre-cosheaf $\hP_0:U\rightsquigarrow H_0(f^{-1}(U))$ is a cosheaf. To see why, let $W=U\cup V$. By continuity of the map $f$ and the Mayer-Vietoris long-exact sequence in homology, we have the exact sequence (meaning the kernel of one map is the image of the previous) of vector spaces
  \[
 H_{0}(f^{-1}(U\cap V)) \to H_0(f^{-1}(U)) \oplus H_0(f^{-1}(V)) \to H_0(f^{-1}(W)) \to 0.
  \]
The first two terms are exactly the terms one writes down for computing \v{C}ech homology of $\hP_0$ over the cover $\{U,V\}$, i.e.
  \[
  \hP_0(U\cap V) \to \hP_0(U)\oplus\hP_0(V).
  \]
  The cokernel of this map is precisely the \v{C}ech homology of $\hP_0$ over $\{U,V\}$. The final two terms in the last row of the Mayer-Vietoris long exact sequence says precisely that $\hP_0(W)$ is isomorphic to this cokernel, i.e.
  \[
  \qquad H_0(N_{\{U,V\}};\hP_0)\cong \hP_0(W).
  \]
  Induction proves that $\hP_0$ satisfies the cosheaf condition for finite covers~\cite[p. 418]{Bredon}. To get the full cosheaf condition one then needs to use the fact that homology commutes with direct limits~\cite[p. 162]{spanier} and a technical reformulation of the cosheaf axiom~\cite[Thm.~2.3.4]{sca}.
\end{ex} 

\subsection{Sheafification and the Leray Sheaf}

Both sheaves and cosheaves have the local-to-global properties described above and so either one should be preferred over their ``pre''-cousins. Fortunately, there is a well understood procedure for turning any pre-sheaf into a sheaf called \textbf{sheafification}. It is a cruel asymmetry that there is not a similarly nice procedure for turning any pre-cosheaf into a cosheaf~\cite[Sec.~2.5.4]{sca}.

\begin{defn}[Sheafification]
Let $F:\Open(X)^{op}\to\Vect$ be a pre-sheaf. The \textbf{sheafification} $\widetilde{F}$ of $F$ assigns to every open set $U$ the set of functions $s:U\to\sqcup_{x\in U}F_x$ that locally extend, i.e.~ for every $x\in U$ and $s(x)\in F_x$ there exists a $V\ni x$ with $V\subset U$ and a $t\in F(V)$ such that the image of $t\in F(V)$ in $F_y$ agrees with $s(y)$ for all $y\in V$.
\end{defn}

\begin{defn}[Leray Sheaves]
Suppose $f:X\to Y$ is a continuous map, then the \textbf{$i^{th}$ Leray sheaf} $F^i$ is the sheafification of the Leray pre-sheaf $P^i$ associated to $f$.
\end{defn}

The assertion of this paper is that the Leray sheaves are the proper object of study for understanding the level set persistence of a proper continuous map $f:X\to Y$. Unfortunately, the Leray sheaves are uncomputable in practice and are primarily good for proving theoretical results. In principle the cosheafification of the Leray pre-cosheaves $\hP_i$ would be preferred, but there is no known cosheafification procedure.

\section{Level Set Persistence for Definable Maps}

In this section we restrict ourselves to a suitably tame class of maps and spaces so that most of the technical discrepancies between pre-sheaves and pre-cosheaves disappear. This class of maps and spaces is defined in terms of finitely many logical operations and includes most applications of interest, most notably point cloud persistence. Finally, we present the culmination of this paper: a collection of functors that can be reliably called the $i^{th}$ level set persistence of a tame map.

\subsection{Tame Topology}

\begin{defn}[\cite{vdd-ttos}, p. 2]\label{defn:o-minimal}
 An \textbf{o-minimal structure on $\RR$} is a sequence of sets $\OO=\{\OO_n\}_{n\geq 0}$ satisfying
 \begin{enumerate}
  \item $\OO_n$ is a boolean algebra of subsets of $\RR^n$, i.e. it is a collection of subsets of $\RR^n$ closed under unions and complements, with $\varnothing \in\OO_n$;
  \item If $A\in\OO_n$, then $A\times\RR$ and $\RR\times A$ are both in $\OO_{n+1}$;
  \item The sets $\{(x_1,\ldots,x_n)\in\RR^n|x_i=x_j\}$ for varying $i\leq j$ are in $\OO_n$;
  \item If $A\in\OO_{n+1}$ then $\pi(A)\in\OO_{n}$ where $\pi:\RR^{n+1}\to\RR^n$ is projection onto the first $n$ factors;
  \item For each $x\in\RR$ we require $\{x\}\in\OO_1$ and $\{(x,y)\in\RR^2|x<y\}\in\OO_2$;
  \item The only sets in $\OO_1$ are the finite unions of open intervals and points.
 \end{enumerate}
When working with a fixed o-minimal structure, we say a set is \textbf{definable} if it belongs to some $\OO_n$. A map is definable if its graph, viewed as a subset of the product, is definable.
\end{defn}

The prototypical o-minimal structure is the class of semi-algebraic sets, which has become increasingly relevant in applied mathematics.

\begin{defn}
  A \textbf{semi-algebraic} subset of $\RR^n$ is a subset of the form
  \[
    X=\bigcup_{i=1}^p\bigcap_{j=1}^q X_{ij}
  \]
  where the sets $X_{ij}$ are of the form $\{f_{ij}(x)=0\}$ or $\{f_{ij}>0\}$ with $f_{ij}$ a polynomial in $n$ variables.
\end{defn}

\begin{prop}[Semi-algebraic Sets are Definable]
The collection of semi-algebraic subsets in $\R^n$ for all $n\geq 0$ defines an o-minimal structure on $\R$.
\end{prop}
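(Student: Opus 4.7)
The plan is to verify the six axioms of Definition~\ref{defn:o-minimal} in turn for the collection $\OO_n$ of semi-algebraic subsets of $\R^n$. Most of the axioms reduce to manipulating the defining polynomial inequalities, so I will carry those out quickly; the genuinely non-trivial content is axiom (4), where I will invoke the Tarski--Seidenberg theorem as a black box.

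For the boolean algebra condition (1), closure under finite unions is immediate from the very definition $X=\bigcup_i \bigcap_j X_{ij}$. Closure under complements follows by de Morgan: the complement of $\{f_{ij}>0\}$ is $\{-f_{ij}>0\}\cup \{f_{ij}=0\}$ and the complement of $\{f_{ij}=0\}$ is $\{f_{ij}>0\}\cup\{-f_{ij}>0\}$, so the complement of any atomic piece is a finite union of atomic pieces and one may distribute to return to the standard form. That $\varnothing\in\OO_n$ comes from $\{1=0\}$. For (2), if $A\subset\R^n$ is cut out by polynomials $f_{ij}(x_1,\ldots,x_n)$, then $A\times\R$ and $\R\times A$ are cut out by the same polynomials viewed as polynomials in $n+1$ variables that are constant in the extra coordinate. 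For (3), the set $\{x_i=x_j\}$ is the zero set of the polynomial $x_i-x_j$. For (5), each point $\{x\}\subset\R$ is the zero set of $y-x$, and $\{(x,y)\in\R^2\,|\,x<y\}$ is defined by $y-x>0$.

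The axiom (4) requires showing that the image of a semi-algebraic set under the projection $\pi:\R^{n+1}\to\R^n$ forgetting the last coordinate is again semi-algebraic. This is the content of the \emph{Tarski--Seidenberg theorem}, which asserts equivalently that the first-order theory of real-closed fields admits quantifier elimination: any formula of the form $(\exists x_{n+1})\,\varphi(x_1,\ldots,x_{n+1})$, with $\varphi$ a boolean combination of polynomial (in)equalities, is equivalent to a quantifier-free formula in $x_1,\ldots,x_n$. I will cite this rather than reprove it; this is by far the hardest step and the only one where real analysis over $\R$ is essentially used.

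Finally, for the o-minimality axiom (6), let $S\in\OO_1$. Then $S$ is a boolean combination of sets of the form $\{f=0\}$ or $\{f>0\}$ for polynomials $f\in\R[x]$. Each such polynomial has finitely many real roots $r_1<\cdots<r_m$, and on each of the complementary open intervals $(-\infty,r_1),(r_1,r_2),\ldots,(r_m,\infty)$ its sign is constant, so each atomic semi-algebraic set in $\R$ is a finite union of points and open intervals. Since finite unions of points and open intervals form a boolean algebra (for complement: the complement of a finite union of points and open intervals is again such a union, using the fact that $\R$ itself is an open interval and intersections of open intervals are open intervals), the whole collection $\OO_1$ consists of finite unions of open intervals and points, as required.
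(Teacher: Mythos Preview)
Your proof is correct and follows essentially the same approach as the paper: verify the easy axioms directly from the polynomial description and invoke Tarski--Seidenberg for the projection axiom. You simply supply more detail for axioms (1)--(3), (5), and (6), where the paper is content to say these are immediate from the definition and that the semi-algebraic subsets of $\R$ are finite unions of points and intervals.
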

\begin{proof}
The only semi-algebraic subsets of $\RR$ are finite unions of points and open intervals. From the definition, one sees that the class of semi-algebraic sets is closed under finite unions and complements. The \textbf{Tarski-Seidenberg} theorem states that the projection onto the first $m$ factors $\RR^{m+n}\to\RR^m$  sends semi-algebraic subsets to semi-algebraic subsets~\cite{coste-sag}. We can deduce from this theorem all of the conditions of o-minimality.
\end{proof}

Semi-algebraic maps are defined to be those maps $f:\RR^k\to \RR^n$ whose graphs are semi-algebraic subsets of the product. The next example shows that the collection of augmented point clouds can be regarded as the fibers of a semi-algebraic map.


\begin{ex}[Point-Cloud Data]\label{ex:pcd}
  Suppose $Z$ is a finite set of points in $\RR^n$. For each $z\in Z$, consider the square of the distance function
  \[d_z(x_1,\ldots,x_n)=\sum_{i=1}^n (x_i-z_i)^2.\]
  By the previously stated facts we know that the sets
  \[
    B_z:=\{x\in\RR^{n+1} \, | \, d_z(x_1,\ldots,x_n)\leq x^2_{n+1}\}
  \]
  are semi-algebraic along with their unions and intersections. Denote by $X$ the union of the $B_z$. The Tarski-Seidenberg theorem implies that the map
  \[
    f:X\to\RR \qquad f^{-1}(r):=\cup_{z\in Z} B(z,r)=\{x\in\RR^n \, | \, \exists z\in Z \, \mathrm{s.t.} \, d_z(x)\leq r^2\} \]
  is semi-algebraic.
\end{ex}

One of the nice features of a point cloud is that the topology of the union $X_r=\cup_{x_i\in Z} B(x_i,r)$ only changes for finitely many values of $r$. This behavior is common among all definable sets and maps.

\begin{defn}
A definable map $f:E\to B$ between definable sets is said to be \textbf{definably trivial} if there is a definable set $F$ and a definable homeomorphism $h:E\to B\times F$ such that the diagram
\[
  \xymatrix{E \ar[rd]_{f} \ar[rr]^h & & B\times F \ar[ld]^{\pi} \\ 
  & B &}
\]
commutes, i.e.~$\pi\circ h=f$.
\end{defn} 

\begin{rmk}\label{rmk:reg-nbhd}
A definably trivial map is simple because the topology of the fiber $f^{-1}(b)\cong F$ does not change. In particular, there is a neighborhood $U$ of $b$ for which $H_i(f^{-1}(U))\cong H_i(f^{-1}(b))$, so that the costalk of the Leray pre-cosheaf agrees with the homology of the fiber. In short, there is no advantage to studying the Leray pre-sheaves over the Leray pre-cosheaves for definably trivial maps.
\end{rmk}

\begin{thm}[Trivialization Theorem~\cite{vdd-ttos}]\label{thm:tame-triv}
Let $f:E\to B$ be a definable continuous map between definable sets $E$ and $B$. Then $B$ can be partitioned into definable sets $B_1,\ldots, B_k$ so that the restrictions
\[
  f|_{f^{-1}(B_i)}:f^{-1}(B_i) \to B_i
\]
are definably trivial.
\end{thm}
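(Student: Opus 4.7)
The plan is to deduce this from the cell decomposition theorem, which is the main structural result in o-minimal geometry and which (like the Tarski--Seidenberg theorem quoted earlier) follows from the axioms of Definition~\ref{defn:o-minimal}. Recall that a cell in $\R^n$ is built inductively: a cell in $\R$ is a point or an open interval, and a cell in $\R^{n+1}$ lying over a cell $C \subset \R^n$ is either the graph $\{(x,g(x)) : x \in C\}$ of a definable continuous function $g:C\to\R$, or an open ``band'' $\{(x,t) : x\in C,\, g_1(x) < t < g_2(x)\}$ where $g_1 < g_2$ are definable continuous functions (allowing $\pm\infty$). The Cell Decomposition Theorem asserts that any finite collection of definable sets in $\R^n$ can be partitioned by finitely many cells, and that for any definable function the domain can be partitioned into cells on which the function is continuous.

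First I would reduce to the case where $E \subset \R^{m+n}$ and $B \subset \R^n$ with $f$ being the restriction of the projection $\pi:\R^{m+n}\to\R^n$: this is possible by replacing $E$ by the graph $\Gamma_f\subset\R^{m+n}$, since $\Gamma_f$ is definable and definably homeomorphic to $E$ via the projection to the first factor. Next, I would apply the Cell Decomposition Theorem to the collection consisting of $E$ and $B$, producing a cell decomposition of $\R^{m+n}$ refining $E$ and such that the projection $\pi(\R^{m+n})\to\R^n$ induces a cell decomposition of $\R^n$ refining $B$. Let $B_1,\dots,B_k$ be the cells of this induced decomposition that lie in $B$; this is the required partition.

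The key step is then to show that for a fixed cell $B_i\subseteq B$, the restriction $f|_{f^{-1}(B_i)}:f^{-1}(B_i)\to B_i$ is definably trivial. By construction, $f^{-1}(B_i)$ is the disjoint union of finitely many cells $D_1,\dots,D_\ell$ of $\R^{m+n}$, each projecting onto $B_i$. By the inductive structure of cells, each $D_j$ admits a canonical definable homeomorphism $h_j:D_j \to B_i \times F_j$ over $B_i$, where $F_j$ is an open box in some $\R^{d_j}$: graph cells give $F_j$ a point, band cells give $F_j$ an interval, and iterating over the $m$ coordinates above $B_i$ produces a box. Taking the disjoint union yields a definable homeomorphism $h = \bigsqcup h_j : f^{-1}(B_i) \to B_i \times F$ with $F = \bigsqcup F_j$ making the required triangle commute.

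The main obstacle is the ``product'' step: a priori the various cells $D_j$ over $B_i$ are homeomorphic to $B_i \times F_j$ with different fibers $F_j$, and one must verify that the disjoint union $F = \bigsqcup F_j$ is itself definable and that the assembled map $h$ is a definable homeomorphism (not merely a fiberwise one). This is exactly where the stability of definable sets under finite unions (axiom (1) of Definition~\ref{defn:o-minimal}) and the continuity clause of cell decomposition are used. Routine verification that definable continuous bijections of cells are homeomorphisms (standard in o-minimal geometry and, for semi-algebraic sets, an application of Tarski--Seidenberg to the graph of $h^{-1}$) completes the argument.
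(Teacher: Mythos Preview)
The paper does not prove this theorem; it merely states it and cites van den Dries's book, so there is no in-paper proof to compare against. That said, your proposal contains a genuine gap at the ``disjoint union'' step.

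The cells $D_1,\dots,D_\ell$ partition $f^{-1}(B_i)$ only \emph{set-theoretically}; they are not open in $f^{-1}(B_i)$, since lower-dimensional cells lie in the closures of higher-dimensional ones. Consequently $f^{-1}(B_i)$ does not carry the coproduct topology of the $D_j$, whereas your target $B_i\times F$ with $F=\bigsqcup F_j$ does. The assembled map $h=\bigsqcup h_j$ is a definable bijection that is continuous on each cell separately, but $h^{-1}$ cannot be continuous whenever $f^{-1}(B_i)$ is connected and $\ell>1$. A minimal instance: take $E=\R^2$, $B=\R$, $f(x,y)=x$. Any cell decomposition compatible with the projection produces at least three cells over $B_i=\R$ (two bands and a graph), and your $h$ would carry the connected plane onto $\R$ times a disconnected space.

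The proof in van den Dries (Chapter~9) does begin exactly as you do---reduce to a coordinate projection and invoke cell decomposition---but the substantive work is an induction on the fiber dimension $m$ that extends a trivialization in $m-1$ coordinates to one in $m$ coordinates, taking care that the resulting map is globally continuous across adjacent cells (not merely cell-by-cell). The fiber $F$ is taken to be an actual fiber $f^{-1}(b_0)$ with its subspace topology, not an abstract disjoint union of boxes. The ``routine verification'' you invoke at the end (definable continuous bijections of cells are homeomorphisms) only certifies each $h_j$ individually; it says nothing about the continuity of $h$ itself, which is exactly what fails.
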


\begin{figure}[ht]
  \centering
  \includegraphics[width=.5\textwidth]{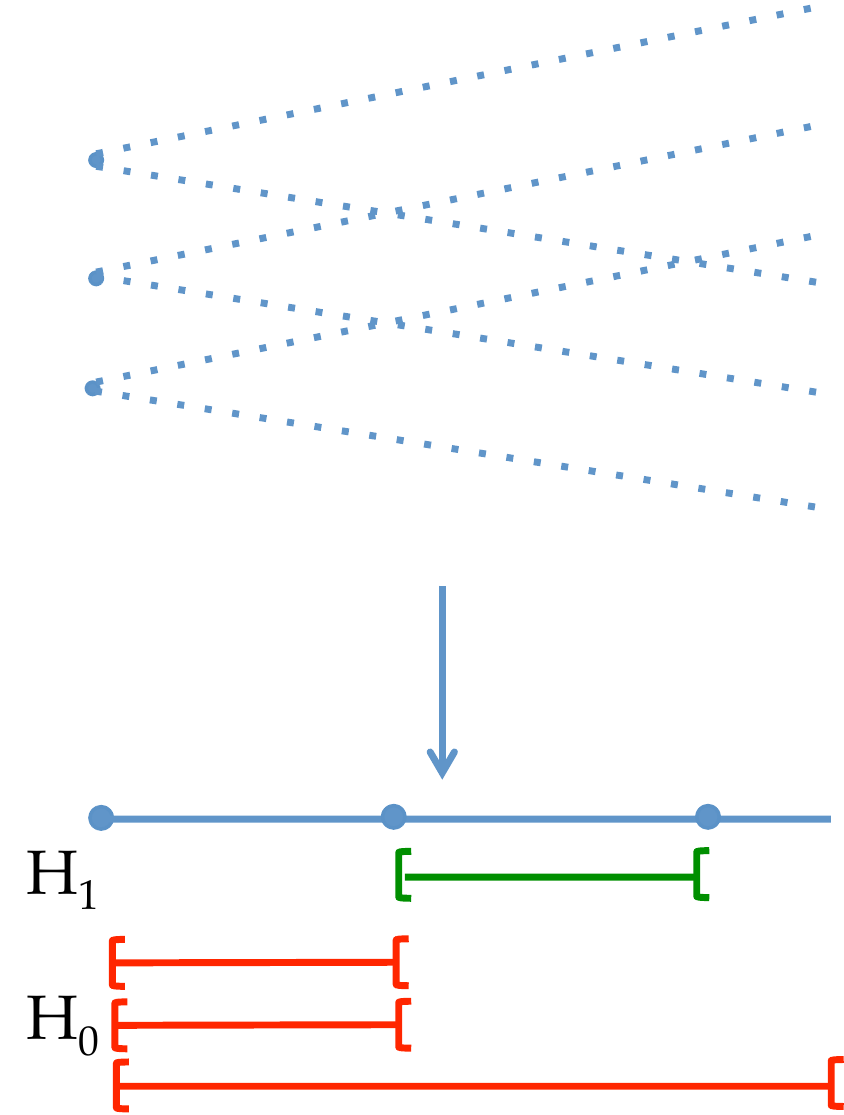}
  \caption{A point cloud consisting of three points in the plane on the edges of an equilateral triangle can be regarded as a definable map. Example~\ref{ex:point-revisited} explains how the persistence modules are constructed.}
  \label{fig:o-minimal-pointcloud}
\end{figure}

\begin{ex}[Point Cloud Revisited]\label{ex:point-revisited}
In Example~\ref{ex:pcd}, we showed that the family of augmented spaces associated to a point cloud is a definable map. This example is crucial because it shows that point cloud persistence is a special case of level set persistence. By Theorem~\ref{thm:tame-triv}, there is a decomposition of $\R$ into definable sets over which the map $f$ is definably trivial. With some work, one can show that this decomposition is into half-open intervals $\{[s_i,s_{i+1})\}$. Let $t_i$ denote a point strictly between $s_i$ and $s_{i+1}$. Letting $X_{r}=f^{-1}(r)$, one can show that there is a sequence of fibers and maps
\[
\cdots \leftarrow X_{t_i} \rightarrow X_{s_{i+1}} \leftarrow X_{t_{i+1}} \rightarrow X_{s_{i+2}} \leftarrow \cdots
\]
where every map $X_{s_i}\leftarrow X_{t_i}$ is a homeomorphism and thus an isomorphism on homology. The fact there is such an isomorphism follows from Remark~\ref{rmk:reg-nbhd}. The fact that there is a map $X_{t_i}\to X_{s_{i+1}}$ follows from the existence of a neighborhood $U$ containing $X_{t_i}$ and $X_{s_{i+1}}$ that deformation retracts onto $X_{s_{i+1}}$~\cite[Prop.~11.1.26]{sca}. Taking homology in each degree produces the persistence modules depicted in Figure~\ref{fig:o-minimal-pointcloud}.
\end{ex}

\subsection{Stratified Spaces and Constructible Cosheaves}

In this section we present the Leray (co)sheaves associated to a definable map in an entirely different way. This characterization is based on a folk-theorem of MacPherson~\cite{treumann-stacks} and is phrased in the language of Whitney stratified spaces, which includes definable sets as a special case~\cite{loi-verdier}, and constructible cosheaves, which we define below. 


\begin{defn}[Whitney Stratified Spaces]
  A \textbf{Whitney stratified space} is a space $X$ that is a closed subset of a smooth manifold $M$ along with a decomposition into pieces $\{X_{\sigma}\}_{\sigma\in P_X}$ such that
  \begin{itemize}
    \item each piece $X_{\sigma}$ is a locally closed smooth submanifold of $M$, and
    \item whenever $X_{\sigma}$ is in the closure of $X_{\tau}$ the pair satisfies \textbf{condition (b)}. This condition says if $\{y_i\}$ is a sequence in $X_{\tau}$ and $\{x_i\}$ is a sequence in $X_{\sigma}$ converging to $p\in X_{\sigma}$ and the tangent spaces $T_{y_i}X_{\tau}$ converges to some plane $T$ at $p$, and the secant lines $\ell_i$ connecting $x_i$ and $y_i$ converge to some line $\ell$ at $p$, then $\ell\subseteq T$. See Figure~\ref{fig:whitney_b}.
  \end{itemize}
\end{defn}
\begin{rmk}
  We have omitted \textbf{condition (a)} because it is implied by condition (b)~\cite[Prop. 2.4]{mather}. Condition (a) states that if we only consider a sequence $y_i$ in $X_{\tau}$ converging to $p$ such that the tangent planes $T_{y_i}X_{\tau}$ converge to some plane $T$, then the tangent plane to $p$ in $X_{\sigma}$ must be contained inside $T$.
\end{rmk}

\begin{figure}
  \centering
  \includegraphics[width=.6\textwidth]{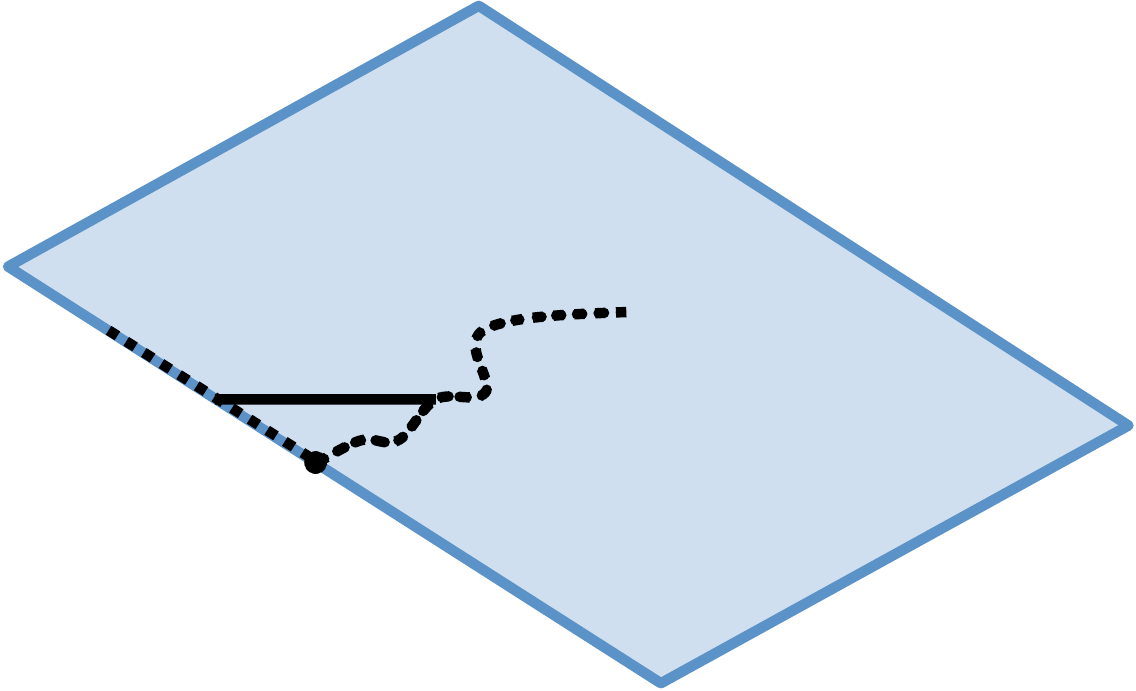}
  \caption{Diagram for Whitney Condition (b)}
  \label{fig:whitney_b}
\end{figure}

The Whitney conditions are important because so many types of spaces admit Whitney stratifications, the most important being semi-algebraic and sub-analytic spaces. Remarkably, these conditions about limits of tangent spaces and secant lines imply strong structural properties of the space, such as being triangulable~\cite{goresky-fol}.

\begin{defn}[Entrance Path Category]
Suppose $X$ is a stratified space. The \textbf{entrance path category} of $X$ $\Entr(X)$ has points of $X$ for objects and equivalence classes of entrance paths for morphisms. An entrance path is a continuous map $\gamma:I=[0,1]\to X$ with the property that the ambient dimension of the stratum containing $\gamma(t)$ is non-increasing with $t$. Two entrance paths $\gamma$ and $\eta$ connecting $x$ to $x'$ are equivalent if there is a map $h:[0,1]^2\to X$ such that for every $s\in [0,1]$ the map $h(s,t)$ is an entrance path, $\gamma(t)=h(0,t)$ and $\eta(t)=h(1,t)$; see Figure~\ref{fig:entr_path}. The \textbf{definable entrance path category} is similar with the added stipulation that $X$ is definable and that all the paths and relations are definable in the sense of Definition~\ref{defn:o-minimal}.
\end{defn}

\begin{ex}\label{ex:entrpath-simplcplx}
If $X$ is the geometric realization of a simplicial complex, then it can be stratified by its open simplices. One can prove that $\Entr(X)$ is equivalent to a poset with the relation that there is a unique entrance path from $\tau$ to $\sigma$ if and only if $\sigma\leq \tau$. We express this succinctly as
\[
  \Entr(X)\simeq (X,\leq)^{op}
\]
\end{ex}

The folk-theorem of MacPherson is that suitably behaved cosheaves defined on stratified spaces are equivalent to functors from the entrance path category. This equivalence would take us beyond the scope of this paper (see~\cite{sca} for a more thorough treatment), so we will simply define these well-behaved cosheaves as functors from the entrance path category.

\begin{figure}
  \centering
  \includegraphics[width=.5\textwidth]{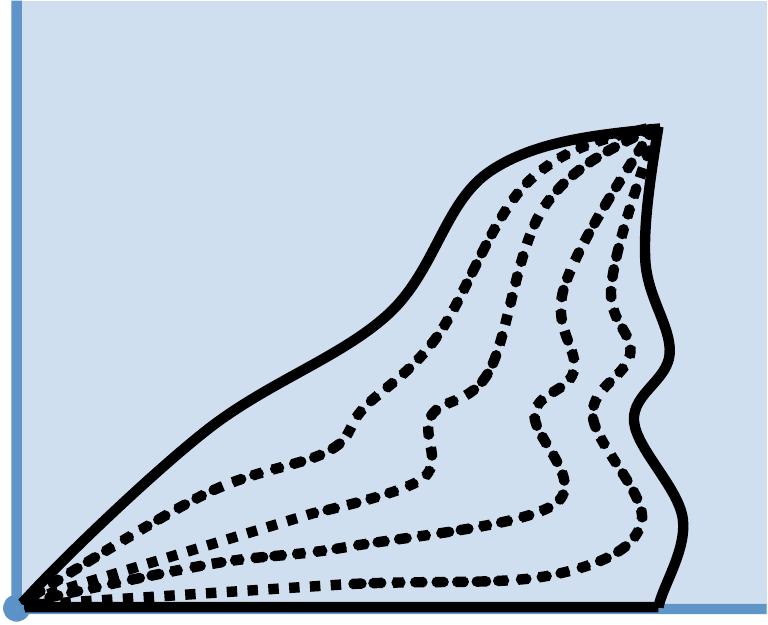}
  \caption{Two entrance paths in the plane related through a family of entrance paths.}
  \label{fig:entr_path}
\end{figure}

\begin{defn}
Suppose $X$ is a stratified space. A \textbf{constructible cosheaf} is a functor $\hF:\Entr(X)\to\Vect$.
\end{defn}

\begin{ex}
By Example~\ref{ex:entrpath-simplcplx}, we see that a simplicial cosheaf on $K$ is the same as a constructible cosheaf on the geometric realization of $K$, regarded as a stratified space.
\end{ex}

The correspondence between constructible cosheaves and actual cosheaves is encapsulated in the following theorem.

\begin{thm}[Correspondence with Cosheaves]\label{thm:reps-to-cosheaves}
Given a constructible cosheaf $\hF$ on a stratified space $X$ one can associate an actual cosheaf, which we also call $\hF$, by observing that each open set $U$ receives an induced stratification from $X$, and hence has an entrance path category, and letting
  \[
  \hF(U):= \varinjlim_{\Entr(U)} \hF|_U
  \]
\end{thm}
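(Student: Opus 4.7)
The plan is to check that the assignment $U \mapsto \varinjlim_{\Entr(U)} \hF|_U$ is indeed a cosheaf: first, that it is functorial in $U$ (a pre-cosheaf), and second, that it satisfies the cosheaf axiom of Definition~\ref{defn:cosheaf-axiom}.

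For functoriality, given an inclusion $V \subseteq U$ of open sets the induced stratifications are compatible, so there is a canonical functor $\iota: \Entr(V) \to \Entr(U)$ sending a point to itself and an entrance path in $V$ to the same map viewed in $U$. Since $\hF|_U \circ \iota = \hF|_V$, the cocone defining $\hF(U)$ restricts to a cocone on $\hF|_V$, yielding an extension map $r_{U,V}: \hF(V) \to \hF(U)$ by the universal property of $\varinjlim \hF|_V$. Associativity $r_{U,W} = r_{U,V} \circ r_{V,W}$ for $W \subseteq V \subseteq U$ follows from iterating the universal property.

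For the cosheaf axiom, let $\cU = \{U_i\}_{i\in I}$ be an open cover of $U$. We need to exhibit $\hF(U)$ as the cokernel
\[
\mathrm{cok}\Bigl(\bigoplus_{i,j} \hF(U_i\cap U_j) \to \bigoplus_i \hF(U_i)\Bigr) = H_0(N_{\cU}; \hF).
\]
The extension maps from the first step provide a cocone $\hF(U_i) \to \hF(U)$ that coequalizes the two maps out of each $\hF(U_i \cap U_j)$, so universality of the cokernel produces a canonical map $\alpha: H_0(N_{\cU}; \hF) \to \hF(U)$. To construct an inverse $\beta: \hF(U) \to H_0(N_{\cU}; \hF)$, I would use the universal property of the colimit defining $\hF(U)$: for each point $x \in U$, choose some index $i(x)$ with $x \in U_{i(x)}$, and define $\beta$ on $\hF(x)$ by the composite $\hF(x) \to \hF(U_{i(x)}) \to \bigoplus_i \hF(U_i) \to H_0(N_{\cU}; \hF)$. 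Independence of the choice $i(x)$ holds because for any two indices $i,j$ with $x \in U_i \cap U_j$, the two resulting images differ by the image of a vector in $\hF(U_i \cap U_j)$ and therefore agree in the Čech cokernel.

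The main obstacle is verifying that $\beta$ respects the morphisms (and 2-morphisms) of $\Entr(U)$. Given an entrance path $\gamma: [0,1] \to U$, I would invoke a Lebesgue-number-type subdivision: the cover $\{\gamma^{-1}(U_i)\}$ of the compact interval admits a refinement $0 = t_0 < t_1 < \cdots < t_n = 1$ with each $\gamma([t_k, t_{k+1}]) \subset U_{i_k}$. Each restricted segment is itself an entrance path in $U_{i_k}$, so $\hF$ of each segment is computed inside $\hF(U_{i_k})$; the junction points $\gamma(t_k) \in U_{i_{k-1}} \cap U_{i_k}$ are precisely where the Čech relations identify the two neighboring contributions. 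An analogous subdivision argument on the square $[0,1]^2$ handles the 2-morphism equivalences defined by families of entrance paths. These subdivisions rest on the local triviality afforded by Whitney or definable stratifications (Theorem~\ref{thm:tame-triv}), which guarantees that entrance paths behave well under restriction. Once $\beta$ is shown to be well-defined on the full colimit diagram, checking that $\alpha \circ \beta = \id$ and $\beta \circ \alpha = \id$ reduces to chasing the two universal properties.
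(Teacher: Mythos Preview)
Your sketch is reasonable and, in essence, follows the route the paper points to: the paper does not prove the theorem in-line but defers to~\cite[Thm.~11.2.15]{sca}, noting only that the crux is a \emph{Van Kampen theorem for the entrance path category}. Your subdivision argument for paths and homotopies is exactly how such a Van Kampen statement is established---one shows that $\Entr(U)$ is the appropriate 2-categorical colimit of the diagram of subcategories $\Entr(U_\sigma)$ over the nerve, and then the cosheaf identity $\hF(U)\cong H_0(N_{\cU};\hF)$ drops out because colimits commute with colimits. So the approaches agree in spirit; you have simply unpacked the Van Kampen step that the paper leaves to a citation.

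Two places deserve more care if you want to make the sketch rigorous. First, your appeal to Theorem~\ref{thm:tame-triv} is slightly off-target: that theorem trivializes definable \emph{maps}, whereas what you need here is the local cone-like structure of the stratified space $X$ itself (Thom--Mather theory or its o-minimal analogue), which guarantees that short entrance paths and small homotopy squares can be controlled inside a single chart. Second, the 2-cell argument on $[0,1]^2$ is more delicate than the 1-cell case: after subdividing the square into small rectangles each mapped into some $U_{i_k}$, you must verify that the restricted homotopy on each rectangle is still a homotopy \emph{through entrance paths}, and then assemble the resulting identifications across the grid. This is where the cone-like local structure is genuinely used, and where most of the work in the cited proof lies.
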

\begin{proof}
This is theorem 11.2.15 of~\cite{sca}. It requires proving a Van Kampen theorem for the entrance path category, which is beyond the scope of this paper.
\end{proof}

\begin{ex}
 In Figure~\ref{fig:colimit-entrance-path} we have two constructible cosheaves over the real line. For each constructible cosheaf we have picked the three open intervals and the corresponding colimit of the cosheaf over the entrance path category restricted to that open set.
\end{ex}

\begin{figure}[ht]
  \centering
  \includegraphics[width=\textwidth]{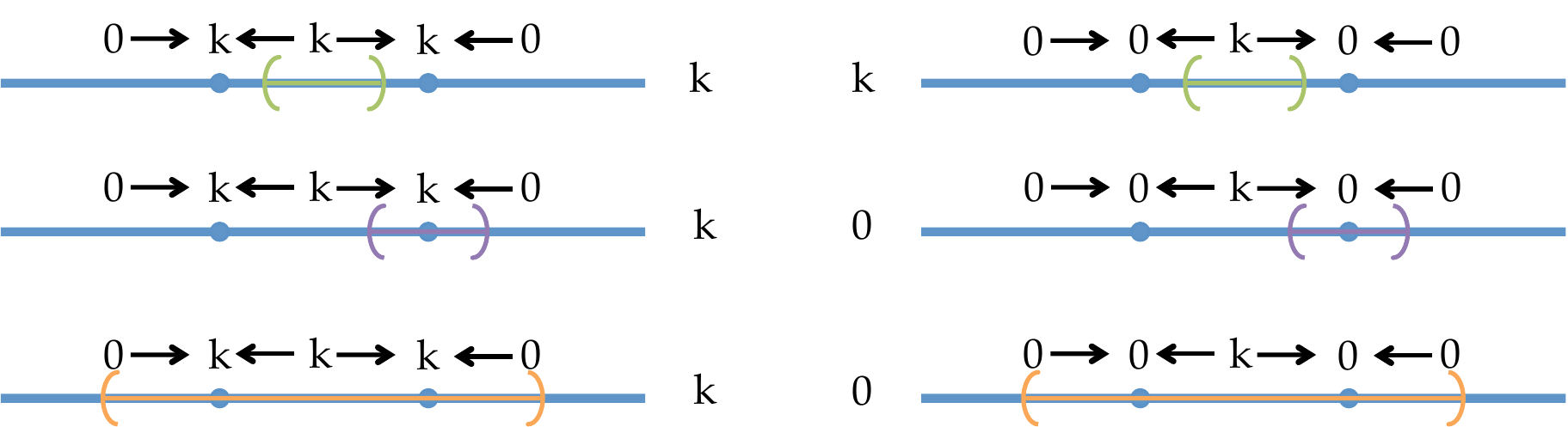}
  \caption{Two constructible cosheaves and the associated colimits of the restriction to various intervals.}
  \label{fig:colimit-entrance-path}
\end{figure}

Now we can state a definable analog of the Leray sheaves that could be programmed on a computer.

\begin{thm}[Constructible Cosheaves from Definable Maps~\cite{sca}]\label{thm:strat_maps}

  If we are given a proper definable map $f:E\to B$ that comes from the restriction of a $C^1$ map between manifolds, then for each $i$ the assignment
  \[
    b\in B \rightsquigarrow H_i(f^{-1}(b))
  \]
defines a definable cosheaf.
\end{thm}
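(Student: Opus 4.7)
The plan is to construct, for each $i$, a functor $\hF_i: \Entr(B) \to \Vect$ sending $b \in B$ to $H_i(f^{-1}(b))$ and then appeal to Theorem~\ref{thm:reps-to-cosheaves} to view it as an honest cosheaf. First I would apply the Trivialization Theorem (Theorem~\ref{thm:tame-triv}) to partition $B$ into definable pieces over each of which $f$ is definably trivial. Using that definable sets admit Whitney stratifications and that the $C^1$ assumption on $f$ provides the smoothness needed to refine the pieces into Whitney strata while preserving definable triviality over each stratum, the fiber $f^{-1}(b)$ is homeomorphic to a fixed definable space $F_\alpha$ for every $b$ in a stratum $B_\alpha$. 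In particular $H_i(f^{-1}(b))$ is constant on strata, which settles the object assignment.

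For morphisms, suppose $\gamma$ is an entrance path from $b$ in a stratum $B_\tau$ to $b'$ in a stratum $B_\sigma$ with $B_\sigma \subseteq \overline{B_\tau}$. By properness of $f$ together with the Whitney conditions, the regular neighborhood theorem already invoked in Example~\ref{ex:point-revisited} (\cite[Prop.~11.1.26]{sca}) produces a definable open neighborhood $U$ of $b'$ such that $f^{-1}(U)$ deformation retracts onto $f^{-1}(b')$. When $\gamma$ is short enough to lie entirely in such a $U$, the associated map is the composition
\[
  \hF_i(\gamma)\colon H_i(f^{-1}(b)) \longrightarrow H_i(f^{-1}(U)) \xrightarrow{\;\cong\;} H_i(f^{-1}(b'))
\]
of the inclusion-induced map with the retraction isomorphism. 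For a general entrance path I would subdivide using compactness and tameness so that each piece sits inside a regular neighborhood of its own endpoint, then compose the resulting maps. Functoriality under composition of entrance paths is then built in, and the constant path gives the identity.

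Two well-definedness checks remain. The easier one is independence of the choice of subdivision and of regular neighborhoods: a common refinement of any two choices yields the same composite because, on overlaps, the corresponding retractions are related by entrance-path homotopies coming from definable triviality on each stratum. The harder check is that equivalent entrance paths induce equal maps. Given an equivalence $h:[0,1]^2 \to B$ of entrance paths from $\gamma$ to $\eta$, I would transport a regular neighborhood family continuously in the parameter $s$, using the definable analog of Thom's first isotopy lemma applied to the pullback of $f$ along $h$, and argue that the associated maps on homology are locally constant in $s$.

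This last step is the principal obstacle: building a coherent family of deformation retractions along the homotopy, and verifying that the induced maps on fiber homology do not jump as $s$ varies. The $C^1$ regularity and the Whitney conditions are essential here, since the controlled tube systems underlying the isotopy lemma are what allow the fiberwise retractions to be glued into a one-parameter family. Once this functoriality is established, Theorem~\ref{thm:reps-to-cosheaves} packages $\hF_i$ into an actual cosheaf on $B$, which by construction is definable, completing the proof.
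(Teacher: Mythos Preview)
Your argument is sound and in fact matches what the paper calls the ``intuitive'' approach: define the map on an entrance path via the zig-zag $f^{-1}(\gamma(0))\hookrightarrow f^{-1}(U)\hookleftarrow f^{-1}(\gamma(1))$ using a regular neighborhood retracting onto the terminal fiber, then subdivide and compose. The paper, however, explicitly sets this aside in favor of a pullback strategy: given an entrance path $\gamma:I\to B$, form the definable set $I\times_B E=\{(t,e)\mid \gamma(t)=f(e)\}$ and study the definable projection $\pi_1:I\times_B E\to[0,1]$; for homotopy invariance one similarly pulls back along $h:I^2\to B$ and proves the statement for the resulting definable map to the square. The virtue of the paper's route is that it reduces everything to definable maps over $[0,1]$ and $[0,1]^2$, where the stratification is very simple and one can argue inductively without tracking coherence of subdivisions or one-parameter families of tube systems. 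Your route, by contrast, is more hands-on: the subdivision-and-refinement bookkeeping and the appeal to Thom's first isotopy lemma with controlled tubes are exactly the kind of technical overhead the pullback trick is designed to bypass. Both arrive at the same functor $\hF_i:\Entr(B)\to\Vect$ and both finish by invoking Theorem~\ref{thm:reps-to-cosheaves}.
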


\begin{rmk}[Sketch of the Proof]
This is a non-trivial theorem, which is proved in detail as Theorem 11.2.17 of~\cite{sca}. The first observation to make is that the fiber $f^{-1}(b)$ over a point $b\in B$ has an open neighborhood $U$ that retracts onto the fiber. This is because $f^{-1}(b)$ can be presented as a closed union of finitely many strata~\cite[p.~60]{vdd-ttos} and the closed union of finitely many strata has a regular neighborhood that retracts onto it~\cite[Prop.~11.1.26]{sca}.

Intuitively, if a path $\gamma:I\to B$ starts in a stratum $B_{\tau}$ that contains $b=\gamma(1)$ in it's closure, then one can assign the homology of the zig-zag of inclusions
\[
f^{-1}(\gamma(0))\hookrightarrow U \hookleftarrow f^{-1}(\gamma(1))
\]
to any morphism $\gamma$ in $\Entr(B)$. However, we prefer a more inductive procedure by considering the pullback $I\times_B E=\{(t,e)\,|\,\gamma(t)=f(e)\}$ as a definable set~\cite[Lem.~11.1.15]{sca} and the projection $\pi_1:I\times_B E\to[0,1]$ as a definable map.

To prove invariance under homotopy through entrance paths, one then considers a definable homotopy $h:I^2\to B$ and pulls back to a definable map to the square $I^2$. One then proves invariance for this restricted map.
\end{rmk}

\bibliographystyle{alpha}
\bibliography{REFS-revised.bib}

\end{document}